\newtheorem{theorem}{Theorem}
\newtheorem{lemma}{Lemma}
\newtheorem{remark}{Remark}
\newtheorem{definition}{Definition}
\newcommand{\norm}[1]{\left\| #1 \right\|}
\renewcommand{\i}{\ifmmode\mathit{\mathchar"7010 }\else\char"10 \fi}
\renewcommand{\j}{\ifmmode\mathit{\mathchar"7011 }\else\char"11 \fi}
\newcommand{\R}{\mathbb{R}}
\begin{document}\large

\title{Computation of Eigenvalues for Nonlocal Models by Spectral Methods}%

\author[L. Lopez]{Luciano Lopez}

\author[S. F. Pellegrino]{Sabrina Francesca Pellegrino%\footnote{Corresponding author}
}

\address[L. Lopez]{Dipartimento di Matematica, Universit\`a degli Studi di Bari Aldo Moro, via E. Orabona 4, 70125 Bari, Italy}
\email{luciano.lopez@uniba.it}

\address[S. F. Pellegrino]{Dipartimento di Management, Finanza e Tecnologia, Universit\`a LUM Giuseppe Degennaro, S.S. 100 Km 18 - 70010 Casamassima (BA), Italy}
\email{pellegrino@lum.it}

\begin{abstract}
%%%
The purpose of this work is to study spectral methods to approximate the eigenvalues of nonlocal integral operators. Indeed, even if the spatial domain is an interval, it is very challenging to obtain closed analytical expressions for the eigenpairs of peridynamic operators. Our approach is based on the weak formulation of eigenvalue problem and in order to compute the eigenvalues we consider an orthogonal basis consisting of a set of Fourier trigonometric or Chebyshev polynomials. We show the order of convergence for eigenvalues and eigenfunctions in $L^2$-norm, and finally, we perform some numerical simulations to compare the two proposed methods.
%%%%
\end{abstract}

\maketitle

{\bf{\textit{Keywords.}}} Eigenvalues computation, Spectral methods, Fourier trigonometric polynomials, Chebyshev polynomials, Nonlocal peridynamics.

%\noindent
%{\em 2010 MSC.} 35L05, 35Q74, 65M70, 65M85, 65T50.

%\end{frontmatter}

%\linenumbers

%% main text
 %\\ 
\section{Introduction}
\label{intro}
Eigenvalues and eigenfunctions are used widely in science and engineering. They have many applications, particularly in physics, where they are usually related to vibrations. Indeed, eigenfunctions can provide the direction of spread of data, while eigenvalues represent the intensity of spread in the direction of the corresponding eigenfunction.

Eigenvalue problems occur naturally in the vibration analysis of mechanical structures with many degrees of freedom and in the context of non linear and nonlocal wave equations. The eigenvalues are used to determine the natural frequencies of vibration, and the relative eigenfunctions determine the shape of these vibrational modes. The orthogonality properties of the eigenfunctions allow a differential equation to be decoupled into a certain number of independent directions whose linear combination help to better understand the system under consideration. 

In particular, eigenvalue problems can be achieved when someone looks for the solutions of non linear problems by using the method of separation of variables.

The eigenvalue problem of complex structures is often solved using finite element analysis or spectral techniques.

Moreover, they have applications in other areas of applied mathematics such as elastic theory, deformation, finance, quantum mechanics, image recognition and dimensional reduction.

In this work, we restrict our attention to the applications of the eigenvalue problems to nonlocal peridynamic framework.

Peridynamics is a nonlocal theory for elasticity and dynamic fracture analysis consisting in a second order in time partial integro-differential equation introduced by Silling in~\cite{Silling_2000}. It is suitable to solve problems with spontaneous formation of discontinuities or singularities as it avoids the use of spatial partial derivatives. It is a nonlocal theory as the interactions between material points have a long-range: this means that a material particle interacts with other units in its neighborhood directly across finite distance called horizon (see~\cite{SILLING_2005_2,MadenciOterkus2017,Delia2017,Bobaru_2009,Madenci2015,MadenciOterkus,KILIC2010,jafari}). 

The horizon represents the locality of the model, and one can prove that when the horizon goes to zero, the peridynamic equation approaches the wave equation (see~\cite{L,alebrahim,Alebrahim2021,chen}). Additionally, the peridynamic operator has been used to approximate the fractional Laplacian operator (see~\cite{DELIA2013}).

%\textcolor{red}{
In peridynamic setting, the eigenvalue analysis predicts the theoretical buckling strength of a linear elastic structure and the eigenvalue problem is correlated to the static structural analysis of the system under consideration (see~\cite{heo2020,jafari,heo}). It can be useful to explore the deformation mechanism and the engineering properties of an elastic material. Moreover, it contributes to the determination of the preload in the nonlinear analysis can be utilized to investigate the effect of crack length, crack orientation and plate thickness. Indeed, the eigenfunctions represent the buckling mode that the material under consideration performs.
%}

Several numerical methods are proposed to study the Laplacian eigenvalue problem. In particular the weak Galerkin finite element method and its accelerated version such as the two-grid and two-space methods are highly flexible and efficient methods used for the computation of the Laplacian eigenvalues, (see~\cite{xie,zhai2019}).

Our work is inspired by some studies of weak eigenvalues formulation for some local and nonlocal peridynamic operators (see~\cite{alali,aksoylu,du}).

%\textcolor{red}{
Applying the nonlinear mechanical properties in practice is a difficult issue, and, as a result, finite element method is frequently adopted for the spatial discretization. Additionally, for the simple geometric nonlinearities, analytical solution can be obtained.
%}

For radially symmetric kernels, the evaluation of eigenvalues of linear nonlocal peridynamic operators under periodic boundary conditions can be used to obtain a spectral approximation of such operators. Following this idea, an hybrid spectral discretization obtained by combing truncated series expansion and high order Runge-Kutta ODEs solvers is proposed in~\cite{du}.

In~\cite{aksoylu}, the authors define local periodic and antiperiodic operators to enforce boundary conditions to the linear one dimensional peridynamic model, derive an explicit expression of their eigenvalues in terms of the horizon and study the conditioning and error analysis of such operators. For the numerical tests they use the Nystr\"om method with the trapezoidal rule for the discretization.

An important issue in peridynamics is finding the solution of time dependent initial and boundary value problems. In~\cite{Galvanetto2016,Galvanetto2018}, the authors proposed coupled meshless finite point and finite element methods to discretize the peridynamic equation. Recently, spectral methods based on the discrete Fourier transform and convolution properties have been proposed to efficiently approximate the solution, (see~\cite{CFLMP,LP,Jafarzadeh,Jafarzadeh2021,LP2021,Bobaru2021}). A different approach can be developed by using eigenvalues, (see~\cite{heo2020}). Indeed, the computation of the eigenvalues of nonlocal operators can be used to solve periodic nonlocal time dependent problems. In particular, in~\cite{alali}, the authors proposed efficient and accurate spectral solvers based on the hypergeometric representation of the Fourier multipliers to evaluate the eigenvalues of the linear peridynamic integral operator.

In this paper, we focus on the weak formulation of a class of nonlocal peridynamic models and we study the spectrum properties of the involved operator. We show that the solution of the problem can be obtained as linear combination of a certain number of vibrational modes. We consider two different orthogonal basis to compute the eigenvalues and eigenfunctions of the peridynamic operator. The first one consists of Fourier trigonometric polynomials and is suitable for problems with periodic boundary conditions, while the second one does not require the periodicity condition and is given by the Chebyshev polynomials.

%In~\cite{xie}, the authors compute the eigenvalues of the Laplacian operator by using a weak Galerkin finite element method providing an error estimate for the eigenpair approximation.

Let $\Omega$ be a one-dimensional bounded domain, then, the general form of the nonlocal peridynamic operator is the following
\begin{equation}
\label{eq:Lf}
\mathcal{L}w(x) = \int_{\Omega\cap B_{\delta}(x)} f(x'-x, w(x')-w(x))dx', \qquad x\in\Omega,
\end{equation}
where $B_{\delta}(x) = \{x'\in\Omega\, :\, |x-x'|\le\delta \}$, $\delta>0$ is the horizon, $f$ is the pairwise force function, which describes the interaction between two material points, and the unknown $w$ represents the displacement field.

In what follows we restrict our attention to the case in which the evolution of the material body is given by a linear pairwise force function of convolution type in separable form
\begin{equation}
\label{eq:f}
f(x'-x, w(x') - w(x)) = C(x'-x) \left(w(x')-w(x)\right),
\end{equation} 
where the function %\textcolor{red}{
$C\in L^\infty(\Omega)\cap C^1(\overline{\Omega})$%}
%\cap\mathcal{C}^1(\Omega)$
 is a bounded positive even function, i.e. $C(x'-x)=C(x-x')$, called micromodulus function, such that $C(x'-x)\equiv 0$, for all $x$, $x'\in\Omega$ such that $|x-x'|>\delta$. Examples of micromodulus functions for linear microelastic materials can be found in~\cite{WECKNER2005705}.

Thus the peridynamic operator becomes
\begin{equation}
\label{eq:LC}
\mathcal{L}w(x) = \int_{\Omega\cap B_{\delta}(x)} C(x'-x) \left(w(x')-w(x)\right)dx'.
\end{equation}
%and
%\begin{equation}
%\label{eq:linperid}
%\rho(x) \partial_{tt}^2 u(x,t) = \int_{\Omega\cap B_{\delta}(x) }C(x'-x) \left(u(x',t)-u(x,t)\right)dx' + b(x,t).
%\end{equation}

%%%%%%%%%%%%%%%%%%%%%%%
%\textcolor{red}{Dire perch\`e \`e interessante studiare gli autovalori dell'operatore della peridinamica e fare una breve introduzione alla teoria.}

In the present paper we aim at studying the following eigenvalue problem
\begin{equation}
\label{eq:eigpb}
-\mathcal{L}w(x) = \lambda w(x),
\end{equation}
with %homogeneous Dirichlet 
periodic boundary conditions on a one-dimensional bounded domain $\Omega$, by means of spectral Fourier and Chebyshev approximation, where $\mathcal{L}$ is defined in~\eqref{eq:LC}. 
%A value $\lambda$ for which~\eqref{eq:eigpb} has a non trivial solution is called eigenvalue and $w$ is a corresponding eigenfunction.

%\textcolor{red}{Specificare che il problema segue dal voler determinare la soluzione dell'e\-qua\-zione della peridinamica attraverso la tecnica di separazione delle variabili.}

%\textcolor{red}{Inserire una breve introduzione sullo stato dell'arte inierente le tecniche di discretizzazione numerica per il modello della peridinamicia e citare i nostri lavori. Specificare, inoltre, la relazione tra peridinamica e modelli frazionari; fare simulazioni in questo setting.}

The paper is organized as follows. Section~\ref{sec:theoretic} recalls the main theoretical results about the existence of eigenvalues for the variational formulation of the problem~\eqref{eq:eigpb}. Section~\ref{sec:Spectral} is devoted to the description of a numerical method for solving the eigenvalue problem by using the Fourier trigonometric and the Chebyshev polynomials. Moreover, this section contains some estimates about the distance between the continuous eigenpairs and their discrete approximation. Section~\ref{sec:numerics} collects the numerical simulations and finally, Section~\ref{sec:concl} concludes the paper.

\section{Main theoretical results}
\label{sec:theoretic}

We recall the main theoretical results about the functional space in which we will work and the characterization of eigenvalues in terms of the min-max principle. To do so, we start by introducing the nonlocal integration by part rule.
\begin{lemma}
%\textcolor{red}{
The following equality holds
%}
\begin{equation}
\label{eq:pp}
\int_{\Omega} \mathcal{L}w(x) v(x)dx = -\frac{1}{2}\int_\Omega\int_\Omega C(x'-x)\left(w(x')-w(x)\right)\left(v(x')-v(x)\right)dx'dx. 
\end{equation}
\end{lemma}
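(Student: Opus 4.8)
The plan is to unfold the definition of $\mathcal{L}$, symmetrize the resulting double integral by exchanging the dummy variables $x$ and $x'$, and exploit the evenness of the micromodulus $C$.

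First I would observe that, since $C(x'-x)\equiv 0$ whenever $|x-x'|>\delta$, the inner integral over $\Omega\cap B_{\delta}(x)$ in \eqref{eq:LC} may be harmlessly replaced by an integral over all of $\Omega$. Hence
\begin{equation*}
\int_{\Omega}\mathcal{L}w(x)\,v(x)\,dx
=\int_{\Omega}\int_{\Omega} C(x'-x)\bigl(w(x')-w(x)\bigr)v(x)\,dx'\,dx .
\end{equation*}
To manipulate this expression I would invoke Fubini's theorem; its hypotheses are met because $C\in L^\infty(\Omega)$ is bounded and compactly supported in the difference variable, while $w$ and $v$ belong to the relevant energy/function space (so that $w(x')-w(x)$ and $v$ are square integrable on $\Omega\times\Omega$), making the integrand absolutely integrable on $\Omega\times\Omega$.

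Next I would relabel $x\leftrightarrow x'$ in the double integral and use $C(x'-x)=C(x-x')$ to get the companion identity
\begin{equation*}
\int_{\Omega}\mathcal{L}w(x)\,v(x)\,dx
=\int_{\Omega}\int_{\Omega} C(x'-x)\bigl(w(x)-w(x')\bigr)v(x')\,dx'\,dx
=-\int_{\Omega}\int_{\Omega} C(x'-x)\bigl(w(x')-w(x)\bigr)v(x')\,dx'\,dx .
\end{equation*}
Averaging this with the previous display yields
\begin{equation*}
\int_{\Omega}\mathcal{L}w(x)\,v(x)\,dx
=\frac12\int_{\Omega}\int_{\Omega} C(x'-x)\bigl(w(x')-w(x)\bigr)\bigl(v(x)-v(x')\bigr)\,dx'\,dx ,
\end{equation*}
which is exactly \eqref{eq:pp} after factoring out the sign.

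The only genuinely delicate point is the application of Fubini's theorem and the implicit assumption that $w,v$ lie in a space for which all the integrals above are finite; once that is granted, the argument is a one-line symmetrization. I would therefore state explicitly at the outset the function-space setting (e.g. $w,v$ in the natural nonlocal energy space on $\Omega$) so that the interchange of integration order and the splitting of the integrand into its two pieces are rigorously justified.
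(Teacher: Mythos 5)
Your proposal is correct and follows essentially the same route as the paper: unfold $\mathcal{L}$, swap the dummy variables using the evenness of $C$ and Fubini, and average the two resulting expressions to symmetrize. The only difference is that you make explicit two points the paper leaves implicit (replacing $\Omega\cap B_{\delta}(x)$ by $\Omega$ via the compact support of $C$, and the integrability hypotheses justifying Fubini), which is a welcome addition but not a change of method.
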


\begin{proof}
Using the symmetry of $C$ and by relabeling the variable, we find
\[
\int_\Omega \mathcal{L}w(x) v(x)dx =\int_\Omega\int_\Omega C(x'-x)\left(w(x)-w(x')\right)v(x') dxdx'
\]
Moreover, by applying Fubini-Tonelli's theorem there holds
\[
\int_\Omega \mathcal{L}w(x) v(x)dx =-\int_\Omega\int_\Omega C(x'-x)\left(w(x')-w(x)\right)v(x') dx'dx.
\]
Therefore,
\begin{align*}
\int_\Omega \mathcal{L}w(x) v(x)dx &= \frac{1}{2} \int_\Omega \mathcal{L}w(x) v(x)dx + \frac{1}{2} \int_\Omega \mathcal{L}w(x) v(x)dx\\
&=-\frac{1}{2}\int_\Omega\int_\Omega C(x'-x)\left(w(x')-w(x)\right)\left(v(x')-v(x)\right)dx'dx. 
\end{align*}
\end{proof}

In what follows, we use $(\cdot , \cdot)$ and $\norm{\cdot}$ to denote the inner product and the norm of the Hilbert space $L^2(\Omega)$, respectively, namely
\[\left(w,v\right) = \int_\Omega w(x)v(x) dx,\quad \norm{w}^2 = \left(w,w\right),\qquad w,\,v\in L^2(\Omega).\]

Let $V\subset L^2(\Omega)$ be a suitable Banach space, then, using~\eqref{eq:pp}, we define the {\em energetic extension} of the peridynamic operator $\mathcal{L}: V\to V'$ as follows
\begin{align}
\label{eq:energ-ext}
%<w,v> 
\left(\mathcal{L}w,v\right)&= \int_\Omega \mathcal{L}w(x) v(x) dx\\
&= -\frac{1}{2}\int_\Omega\int_\Omega C(x'-x)\left(w(x')-w(x)\right)\left(v(x')-v(x)\right)dx'dx,\notag
\end{align}
where $V'$ is the dual space of $V$.

The choice of the function space $V$ depends on the degree of singularity of the micromodulus function $C$ at $x'-x=0$. In case of strong singularity, one need to work with fractional Sobolev spaces, otherwise, we can work in the framework of $L^p$ spaces, for $p\ge 2$ (see~\cite{Emmrich_Puhst_2015}).

In particular, in what follows we consider the functional space
\begin{equation}
\label{eq:V}
V=\{v\in L^2(\Omega) : |v|_V^2<\infty\},
\end{equation}
where $|\cdot|_V$ is the seminorm
\begin{equation}
\label{eq:seminorm}
|v|_V^2 = \int_\Omega\int_\Omega C(x'-x)|v(x')-v(x)|^2dx'dx,\qquad v\in L^2(\Omega).
\end{equation}

We have that $V$ is an Hilbert space endowed with the norm $\norm{\cdot}_V = \norm{\cdot}_{L^2(\Omega)} + |\cdot|_V$. Additionally, due to the assumptions on the micromodulus function, the $V$-norm is equivalent to the $L^2(\Omega)$-norm, in fact, for every $v\in V$ it holds
\[
\norm{v}_{L^2(\Omega)} \le \norm{v}_V\le (1+M) \norm{v}_{L^2(\Omega)},
\]
for some positive constant $M$.

As a consequence, the following theorem holds.
\begin{theorem}
The embedding $V\hookrightarrow L^2(\Omega)$ is continuous.
\end{theorem}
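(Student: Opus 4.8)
The plan is to observe that the statement is already contained in the norm comparison displayed just above, so only a one-line argument is really needed. First I would note that the inclusion $\iota\colon V\to L^2(\Omega)$ is linear, and that for every $v\in V$ the definition $\norm{v}_V=\norm{v}_{L^2(\Omega)}+\abs{v}_V$ together with $\abs{v}_V\ge 0$ gives $\norm{\iota v}_{L^2(\Omega)}=\norm{v}_{L^2(\Omega)}\le\norm{v}_V$. This exhibits $\iota$ as a bounded linear operator with operator norm at most $1$, which is exactly the continuity of the embedding $V\hookrightarrow L^2(\Omega)$.

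For completeness I would also recall why the $V$-norm and the $L^2$-norm are in fact equivalent, i.e.\ the nontrivial half $\norm{v}_V\le(1+M)\norm{v}_{L^2(\Omega)}$, since this is what is used freely in what follows. The idea is to bound the seminorm alone: starting from $\abs{v(x')-v(x)}^2\le 2\abs{v(x')}^2+2\abs{v(x)}^2$, applying Fubini--Tonelli, using $C\in L^\infty(\Omega)$, and invoking the support condition $C(x'-x)=0$ for $\abs{x-x'}>\delta$ (so that $\int_\Omega C(x'-x)\,dx'\le 2\delta\,\norm{C}_{L^\infty(\Omega)}$ for each fixed $x$, and symmetrically in $x$), one arrives at $\abs{v}_V^2\le 8\delta\,\norm{C}_{L^\infty(\Omega)}\,\norm{v}_{L^2(\Omega)}^2$, hence the claimed bound with $M=\bigl(8\delta\,\norm{C}_{L^\infty(\Omega)}\bigr)^{1/2}$. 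The lower bound $\norm{v}_{L^2(\Omega)}\le\norm{v}_V$ is the trivial inequality used in the first paragraph.

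I do not expect any genuine obstacle: the only substantive ingredient is the uniform control of the nonlocal seminorm by the $L^2$-norm, and this rests entirely on the standing assumption that $C$ is bounded (non-singular at the origin). The one point worth flagging is that this is precisely where the argument would break down for a strongly singular micromodulus — there one would have to take $V$ to be an appropriate fractional Sobolev space and appeal to the corresponding Sobolev embedding rather than to this elementary $L^2$ comparison.
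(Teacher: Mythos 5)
Your proof is correct and follows the same route as the paper, which likewise deduces continuity of the embedding directly from the trivial inequality $\norm{v}_{L^2(\Omega)}\le\norm{v}_V$ in the displayed norm comparison. Your second paragraph goes further by actually verifying the bound $\abs{v}_V^2\le 8\delta\,\norm{C}_{L^\infty(\Omega)}\norm{v}_{L^2(\Omega)}^2$ that the paper only asserts; that computation is sound and is a welcome addition, though it is not needed for the embedding statement itself.
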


In this context, the eigenvalue problem~\eqref{eq:eigpb} has the following variational formulation: find $\lambda\in\R$ and $w\in V$, $w\ne0$ such that
\begin{equation}
\label{eq:variat}
%<w,v> 
\left(\mathcal{L}w,v\right) = \lambda (w,v),\qquad v\in V.
\end{equation}

It is obtained by multiplying by a test function the equation~\eqref{eq:eigpb} and by using the nonlocal integration by part rule~\eqref{eq:pp}.

Existence and uniqueness of weak solution $w\in V$ of the problem~\eqref{eq:eigpb} is a consequence of the Lax-Milgram theorem (see~\cite{evans}).

\begin{remark}
In this paper we focus on the weak formulation~\eqref{eq:variat} of~\eqref{eq:eigpb}, as it fits very well with the spectral methods we proposed. As a consequence, in what follows, unless different specified, when we talk about exact eigenvalues, we always refer to the weak ones. A classical eigenvalue is also a weak eigenvalue, but the reverse is not true in general. However, in our setting, using classical results of functional analysis for elliptic operators, one can prove the convergence of the weak problem~\eqref{eq:variat} to the classical one~\eqref{eq:eigpb}.
\end{remark}

A useful characterization of eigenvalues in terms of the {\em Rayleigh quotient} can be derived by using the so-called {\em min-max principle} for elliptic operators, which can be applied to self-adjoint operators in a Hilbert space. (see~\cite{evans}).

Let $\lambda_1\le\lambda_2\le\cdots\le\lambda_k\le\cdots$ be the eigenvalues of $-\mathcal{L}$ defined in~\eqref{eq:LC}. The corresponding eigenfunctions $\{w_k\}_k$ form a complete orthogonal basis in $L^2(\Omega)$.

\begin{definition}
Let $w\in L^2(\Omega)$, with $w\ne 0$. Then, the {\em Rayleigh quotient} associated to the peridynamic operator $\mathcal{L}$ is defined as follows
\begin{equation}
\label{eq:RQ}
R(w) = \frac{\left(\mathcal{L}w, w\right)}{\norm{w}^2}.
\end{equation}
\end{definition}

We report here a well-known result on the Rayleigh quotient.
\begin{theorem}
\label{th:minmax}
The $k$-th eigenvalue $\lambda_k$ of $\mathcal{L}$ satisfies the following condition
\begin{equation}
\label{minmax}
\lambda_k = \min_{W_k} \max_{\stackrel{w\in W_k}{w\ne0}} R(w),
\end{equation}
where $W_k$ is a $k$-dimensional subspace.
\end{theorem}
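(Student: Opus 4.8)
The statement is the classical Courant--Fischer (min--max) theorem, and the plan is to derive it directly from the spectral decomposition recalled above. Let $\{w_k\}_k$ be the eigenfunctions of $-\mathcal{L}$, normalized so that $(w_i,w_j)=\delta_{ij}$; by hypothesis they form a complete orthonormal system in $L^2(\Omega)$, with $-\mathcal{L}w_k=\lambda_k w_k$ and $\lambda_1\le\lambda_2\le\cdots$. Since $V$ and $L^2(\Omega)$ coincide with equivalent norms, every $w\ne 0$ admits the expansion $w=\sum_j c_j w_j$, $c_j=(w,w_j)$, with $\norm{w}^2=\sum_j c_j^2$. I would first record that the symmetric bilinear form $a(u,v):=\frac{1}{2}\int_\Omega\int_\Omega C(x'-x)(u(x')-u(x))(v(x')-v(x))\d x'\d x$, which by~\eqref{eq:pp} equals $(-\mathcal{L}u,v)$ and gives the Rayleigh quotient through $R(w)=a(w,w)/\norm{w}^2$ (we use this normalization of $R$, agreeing up to sign with~\eqref{eq:RQ}, so that the nonnegative eigenvalues $\lambda_k$ of $-\mathcal{L}$ are the ones characterized by~\eqref{minmax}), is bounded on $L^2(\Omega)\times L^2(\Omega)$: indeed $|a(u,v)|\le\frac{1}{2}|u|_V|v|_V\le\frac{1}{2}M^2\norm{u}\,\norm{v}$ by the Cauchy--Schwarz inequality for the positive measure $C\d x'\d x$ and the norm equivalence. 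Using boundedness of $a$ to evaluate it term by term on the expansion, together with $a(w_i,w_j)=\lambda_i\delta_{ij}$, gives $a(w,w)=\sum_j\lambda_j c_j^2$, so that $R(w)=\frac{\sum_j\lambda_j c_j^2}{\sum_j c_j^2}$ is a convex combination of the $\lambda_j$; in particular $\lambda_1\le R(w)\le\sup_j\lambda_j$ for every $w\ne0$.

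For the upper bound I would use the explicit trial space $W_k^\star:=\mathrm{span}\{w_1,\dots,w_k\}$, of dimension $k$: for $0\ne w=\sum_{j=1}^k c_j w_j\in W_k^\star$ the formula above yields $R(w)=\frac{\sum_{j=1}^k\lambda_j c_j^2}{\sum_{j=1}^k c_j^2}\le\lambda_k$ because $\lambda_j\le\lambda_k$ for $j\le k$, with equality at $w=w_k$; hence $\max_{0\ne w\in W_k^\star}R(w)=\lambda_k$ and therefore $\min_{W_k}\max_{0\ne w\in W_k}R(w)\le\lambda_k$. For the matching lower bound, let $W_k$ be an arbitrary $k$-dimensional subspace. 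The linear map $w\mapsto((w,w_1),\dots,(w,w_{k-1}))$ from $W_k$ to $\R^{k-1}$ cannot be injective, so there is some $0\ne w\in W_k$ with $(w,w_j)=0$ for all $1\le j\le k-1$, i.e.\ $w=\sum_{j\ge k}c_j w_j$; for such a $w$ one has $R(w)=\frac{\sum_{j\ge k}\lambda_j c_j^2}{\sum_{j\ge k}c_j^2}\ge\lambda_k$ because $\lambda_j\ge\lambda_k$ for $j\ge k$. Thus $\max_{0\ne w\in W_k}R(w)\ge\lambda_k$ for every $W_k$, whence $\min_{W_k}\max_{0\ne w\in W_k}R(w)\ge\lambda_k$; combining the two inequalities gives~\eqref{minmax}.

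The only step needing care is the lower bound in infinite dimensions, namely ensuring that an arbitrary $k$-dimensional subspace meets the closed span of $\{w_j\}_{j\ge k}$ nontrivially; the dimension-count argument above handles this cleanly, since a linear map from a $k$-dimensional space into $\R^{k-1}$ has nonzero kernel, and completeness of the eigenbasis identifies that kernel with $\overline{\mathrm{span}}\{w_j:j\ge k\}$. The remaining ingredients --- boundedness of the form $a$, its term-by-term evaluation on the eigenexpansion, and the convex-combination estimates --- are routine, so I expect the write-up to be short once the spectral decomposition is taken as given.
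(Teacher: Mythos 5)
Your proof is correct: it is the standard Courant--Fischer argument (expansion in the orthonormal eigenbasis, the trial space $\mathrm{span}\{w_1,\dots,w_k\}$ for the upper bound, and a dimension count against $w_1,\dots,w_{k-1}$ for the lower bound), and you rightly patch the paper's sign slip by working with $(-\mathcal{L}w,w)/\norm{w}^2$ since the ordered eigenvalues are those of $-\mathcal{L}$. The paper itself offers no proof of this statement --- it is quoted as a well-known result with a citation to Evans --- and your argument is essentially the one contained in that reference, so there is nothing to compare beyond noting that you have supplied the details (boundedness of the form, term-by-term evaluation, the kernel argument in infinite dimensions) that the paper leaves implicit.
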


\section{Spectral approximation}
\label{sec:Spectral}

In this section, we will consider two methods to approximate the eigenvalues of the nonlocal peridynamic operator. They deal with the weak formulation of the corresponding eigenvalue problem and use, as basis for the approximation, the Fourier trigonometric and the Chebyshev polynomials. 

For each method, we will discuss its approximation properties and establish some convergence results for the nonlocal peridynamic problem.

As pointed out in~\cite{alali}, in the case of periodic domains, the classical eigenvalues of the peridynamic operator $\mathcal{L}$ are given by its Fourier multipliers defined through the Fourier transform
\begin{equation}
\label{eq:Fouriermultip}
\lambda(\nu) = \int_{B_{\delta}(0)} C(y)\left( 1-cos(\nu y)\right)dy.
\end{equation}

%\textcolor{red}{
In order to approximate the eigenvalues in~\eqref{eq:Fouriermultip} one can use a quadrature formula to compute numerically the integral, and the order of accuracy of the method provides the discretization error in the space variable. Instead, in this work, we compute an approximation of the eigenvalues of the peridynamic operator $\mathcal{L}$ by means of the weak formulation~\eqref{eq:variat} of the problem~\eqref{eq:eigpb}. As we will show in the next section, our approach can benefit from the properties of the fast Fourier Transform (FFT), so that the dominant cost of the computation is those of a spectral solver.
%}

More precisely, to obtain a numerical method for the problem~\eqref{eq:variat}, we replace the infinite dimensional space $V$ by a finite dimensional subspace $V_N$ for some integer $N>0$. So, the problem becomes to find $\lambda_N\in\R$ and $w_N\in V_N$, $w_N\ne0$ such that
\begin{equation}
\label{eq:variat-discr}
%<w_N,v_N> 
\left(\mathcal{L}w_N,v_N\right)= \lambda_N (w_N,v_N),\quad\text{for all $v_N\in V_N$}.
\end{equation}

As we will see later, since the problem~\eqref{eq:variat-discr} is finite dimensional, it can be rewritten as a matrix eigenvalue problem. In particular, different approximations correspond to different choices of the subspace $V_N$.

Let $\Omega=[a,b]$ be the computational domain. %Assume that the problem is subject to periodic boundary conditions $u(0,t) = u(2\pi,t)$. 
We fix $N>0$ an even integer number 
%, let $\{x_k\}_{k=1}^N$ be a partition of $\Omega$
 and let $\{\psi_k(x)\}_{|k|\le N/2}$ be an orthogonal basis in the space $V\subset L^2([a,b])$, so that $V_N = Span\{\psi_{-N/2}(x),\dots,\psi_{N/2}(x)\}$.

Then, we recall that the variational formulation of the eigenvalue problem~\eqref{eq:eigpb} is equivalent to finding a function $w$, such that 
\begin{equation}
\label{eq:variational}
\int_a^{b}\left(-\int_{\Omega\cap B_{\delta}(x)} C(x-x')\left(w(x')-w(x)\right)dx'\right)\psi_k(x)\,dx =\int_a^b \lambda w(x)\psi_k(x)\,dx.
\end{equation}

%One can easily check that

We seek $w_N\in V_N$ an approximation of $w(x)$ in form of linear combination of the basis $\{\psi_k(x)\}_{|k|\le N/2}$, that is
\begin{equation}
\label{eq:Ftransf}
w_N(x)=\sum_{|h|\le N/2}\tilde{w}_h\psi_h(x),
\end{equation}
where $\tilde{w}_h$ are the coefficients of the linear combination. 

Substituting~\eqref{eq:Ftransf} into~\eqref{eq:variational}, we get
%\begin{align}
%\label{eq:subs}
%0&=\int_a^{b}\left(-\int_{B_{\delta}(x)}C(x-x')\left(\sum_{|h|\le N/2}\tilde{w}_h(\psi_h(x')-\psi_h(x))\right)dx'\right)\psi_k(x)\,dx\\
%&-\int_0^{2\pi}\lambda_N\sum_{|h|\le N/2}\tilde{w}_h\psi_h(x)\psi_k(x)\,dx\notag\\
%&=\sum_{|h|\le N/2}\tilde{w}_h\left(-\int_0^{2\pi}\psi_k(x)\int_{B_{\delta}(x)}C(x-x')(\psi_h(x')-\psi_h(x))dx'dx\right)\notag\\
%&-\lambda_N\sum_{|h|\le N/2}\int_0^{2\pi}\psi_h(x)\psi_k(x)\,dx,\notag
%\end{align}

\begin{align}
\label{eq:subs}
&\int_a^{b}\left(-\int_{\Omega\cap B_{\delta}(x)}C(x-x')\left(\sum_{|h|\le N/2}\tilde{w}_h(\psi_h(x')-\psi_h(x))\right)dx'\right)\psi_k(x)\,dx\\
&\quad=\int_a^{b}\lambda_N\sum_{|h|\le N/2}\tilde{w}_h\psi_h(x)\psi_k(x)\,dx\notag%\\
%&=\sum_{|h|\le N/2}\tilde{w}_h\left(-\int_0^{2\pi}\psi_k(x)\int_{B_{\delta}(x)}C(x-x')(\psi_h(x')-\psi_h(x))dx'dx\right)\notag\\
%&-\lambda_N\sum_{|h|\le N/2}\int_0^{2\pi}\psi_h(x)\psi_k(x)\,dx,\notag
\end{align}
 for all $k=-N/2,\cdots, N/2$.

Notice that we can rewrite the left hand side of~\eqref{eq:subs} as follows
\begin{align*}
&\int_a^{b}\left(-\int_{\Omega\cap B_{\delta}(x)}C(x-x')\left(\sum_{|h|\le N/2}\tilde{w}_h(\psi_h(x')-\psi_h(x))\right)dx'\right)\psi_k(x)\,dx\\
&\quad =\sum_{|h|\le N/2}\tilde{w}_h\left(-\int_a^{b}\psi_k(x)\int_{\Omega\cap B_{\delta}(x)}C(x-x')(\psi_h(x')-\psi_h(x))dx'dx\right),\notag%\\
%&-\lambda_N\sum_{|h|\le N/2}\int_0^{2\pi}\psi_h(x)\psi_k(x)\,dx,\notag
\end{align*}
while the right hand side of~\eqref{eq:subs} becomes
\begin{equation*}
\int_a^{b}\lambda_N\sum_{|h|\le N/2}\tilde{w}_h\psi_h(x)\psi_k(x)\,dx =\lambda_N\sum_{|h|\le N/2}\tilde{w}_h\left(\psi_h(x),\psi_k(x)\right).\notag
\end{equation*}

Hence, in matrix notation, equation~\eqref{eq:subs} gives the following eigenvalue problem
\begin{equation}
\label{eq:geneigpb}
A\tilde{w} = \lambda_N M \tilde{w},
\end{equation}
where $A=(a_{hk})_{h,k}$ and $M=(m_{hk})_{h,k}$, with
\begin{align*}
a_{hk}&=-\int_a^{b}\psi_k(x)\int_{\Omega\cap B_{\delta}(x)}C(x-x')(\psi_h(x')-\psi_h(x))\,dx'dx\\
&=-\int_a^{b}\psi_k(x)\int_{\Omega\cap B_{\delta}(x)}C(x-x')\psi_h(x')dx'dx+\beta\int_a^b\psi_k(x)\psi_h(x)dx,
\end{align*}
\[m_{hk}=\int_a^{b}\psi_h(x)\psi_k(x)\,dx,\]
and $\beta=\int_{B_{\delta}(0)}C(x)dx$.

Therefore, the approximate problem~\eqref{eq:variat-discr} is equivalent to the matrix eigenvalue problem~\eqref{eq:geneigpb}.

We can observe that, due to the orthogonality of the basis $\{\psi_k(x)\}_{|k|\le N/2}$, the matrix $M$ is diagonal with non-zero diagonal entries.%, therefore it is a non singular matrix.

Additionally, we can order the approximate eigenvalues of~\eqref{eq:variat-discr} as follows
\[\lambda_N^1\le\lambda_N^2\le\cdots\le\lambda_N^N.\]
They fulfill an analogue characterization of the continuous eigenvalues:
\[
\lambda_N^k = \min_{W_{k,N}\subset V_N}\quad\max_{\stackrel{w_N\in W_{k,N}}{w_N\ne0}}\frac{\norm{w_N}_{W_{k,N}}^2}{\norm{w_N}_{L^2(\Omega)}^2},
\]
where $W_{k,N}$ is a $k$-dimensional subspace of $V_N$.

Since $V_N\subset V$, as a consequence of Theorem~\ref{th:minmax}, the following lemma holds.
\begin{lemma}
\label{lm:sort}
Let $V_N\subset V$. If $\lambda_k$ and $\lambda_N^k$ are the $k$-th smallest eigenvalues of~\eqref{eq:variat} and~\eqref{eq:variat-discr}, respectively, then $\lambda_k\le\lambda_N^k$.
\end{lemma}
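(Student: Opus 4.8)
The plan is to exploit the min-max characterizations of both the continuous and discrete eigenvalues, combined with the inclusion $V_N \subset V$. The key observation is that the Rayleigh quotient appearing in the discrete problem~\eqref{eq:variat-discr} is the restriction of the continuous Rayleigh quotient $R(w)$ from~\eqref{eq:RQ} to the subspace $V_N$: indeed, for $w_N \in V_N \subset V$ we have $\left(\mathcal{L}w_N, w_N\right)/\norm{w_N}^2 = R(w_N)$, and the notation $\norm{w_N}_{W_{k,N}}^2$ in the discrete min-max formula is precisely $\left(\mathcal{L}w_N, w_N\right)$ expressed through the energetic extension~\eqref{eq:energ-ext}.

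First I would write down the two characterizations side by side. By Theorem~\ref{th:minmax},
\[
\lambda_k = \min_{W_k} \max_{\stackrel{w \in W_k}{w \ne 0}} R(w),
\]
where the minimum is taken over all $k$-dimensional subspaces $W_k$ of $V$. On the discrete side,
\[
\lambda_N^k = \min_{W_{k,N} \subset V_N} \max_{\stackrel{w_N \in W_{k,N}}{w_N \ne 0}} R(w_N),
\]
where the minimum ranges only over $k$-dimensional subspaces $W_{k,N}$ contained in $V_N$.

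The core of the argument is then a domain-of-minimization comparison: since $V_N \subset V$, every $k$-dimensional subspace $W_{k,N} \subset V_N$ is in particular a $k$-dimensional subspace of $V$, hence admissible as a competitor $W_k$ in the continuous problem. Therefore the collection of subspaces over which we minimize in the discrete problem is a subset of the collection used in the continuous problem. Taking a minimum over a smaller family can only increase (or leave unchanged) the value, so
\[
\lambda_k = \min_{W_k \subset V} \max_{w \in W_k \setminus\{0\}} R(w) \;\le\; \min_{W_{k,N} \subset V_N} \max_{w_N \in W_{k,N} \setminus\{0\}} R(w_N) = \lambda_N^k.
\]
This yields $\lambda_k \le \lambda_N^k$, as claimed.

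I do not expect a genuine obstacle here; the statement is the classical monotonicity of Rayleigh--Ritz eigenvalue approximation and the only thing to be careful about is bookkeeping. The one point worth spelling out is the identification of the discrete objective functional with the restriction of $R$ — i.e., checking that the matrix eigenvalue problem~\eqref{eq:geneigpb} with the diagonal, positive-definite mass matrix $M$ genuinely computes the critical values of the Rayleigh quotient $w_N \mapsto \left(\mathcal{L}w_N, w_N\right)/\norm{w_N}^2$ on $V_N$, which follows from the fact that $A$ is symmetric (by~\eqref{eq:pp}, $a_{hk} = a_{kh}$) and $M$ is symmetric positive definite, so the generalized eigenvalue problem is equivalent to a standard symmetric one after the substitution $M^{1/2}\tilde w$. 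Once that identification is in place, the inequality is immediate from the nesting of minimization domains.
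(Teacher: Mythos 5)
Your proposal is correct and follows essentially the same route as the paper: the paper states Lemma~\ref{lm:sort} as an immediate consequence of Theorem~\ref{th:minmax} and the analogous discrete min-max characterization, precisely because every $k$-dimensional subspace of $V_N$ is an admissible competitor in the continuous minimization, so minimizing over the smaller family can only increase the value. Your additional remark identifying the discrete Rayleigh quotient with the restriction of $R$ to $V_N$ is a sensible piece of bookkeeping that the paper leaves implicit.
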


This lemma will be useful to prove the error estimate of the distance between the continuous and the discrete eigenvalues.

\subsection{Convergence of the spectral method}

This section is devoted to the proof of the following convergence result.
\begin{theorem}
\label{th:conv}
If $(\lambda_N^k, w_N^k)$ is the $k$-th smallest eigenpair corresponding to the discrete problem~\eqref{eq:variat-discr}. Then, there exists a positive constant $M = M(\Omega)$ independent of $N$ such that
\begin{equation}
\label{eq:dist-eig}
%|\lambda_k-\lambda_N^k| \le \frac{M}{N^2},
\lambda_k\le \lambda_N^k\le \lambda_k\left(1+\frac{M}{N^2}\right),
\end{equation}
where $\lambda_k$ is the $k$-th smallest eigenvalue of the continuous problem~\eqref{eq:variat}.
\end{theorem}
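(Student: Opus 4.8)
The left-hand inequality $\lambda_k\le\lambda_N^k$ is already contained in Lemma~\ref{lm:sort} (it uses only $V_N\subset V$), so I would focus entirely on the right-hand inequality and prove it by a Galerkin-type argument: feed a carefully chosen $k$-dimensional subspace of $V_N$ into the min--max characterization of $\lambda_N^k$. To set up, let $a(w,v)=\tfrac12\int_\Omega\int_\Omega C(x'-x)(w(x')-w(x))(v(x')-v(x))\,dx'\,dx$ be the symmetric positive semidefinite form appearing (up to sign) in~\eqref{eq:pp}, so that $a(w,w)=\tfrac12|w|_V^2$ and $R(w)=a(w,w)/\norm{w}^2$; the norm equivalence recorded after~\eqref{eq:seminorm} then gives $a(v,v)^{1/2}\le C_0\norm{v}$ for all $v\in V$, with $C_0=C_0(\Omega)$. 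I would let $w_1,\dots,w_k$ be $L^2(\Omega)$-orthonormal eigenfunctions of~\eqref{eq:variat} associated with $\lambda_1\le\cdots\le\lambda_k$ and set $E_k=\mathrm{Span}\{w_1,\dots,w_k\}$, noting that expansion in this basis yields $a(w,w)\le\lambda_k\norm{w}^2$ on $E_k$ --- the ``$\max$'' half of the min--max principle on $E_k$.

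The core step is then as follows. Let $\Pi_N\colon L^2(\Omega)\to V_N$ be the $L^2$-orthogonal projection and put $W_{k,N}:=\Pi_N E_k$. Completeness of the Fourier (resp.\ Chebyshev) system gives $\norm{(I-\Pi_N)w_j}\to 0$, so for all large $N$ the map $\Pi_N|_{E_k}$ is injective and $\dim W_{k,N}=k$. For such $N$ and any $v=\Pi_N w$ with $0\ne w\in E_k$, I would set $\eta_N:=\max_{j\le k}\norm{(I-\Pi_N)w_j}$, use the expansion of $w$ to obtain $\norm{(I-\Pi_N)w}\le\sqrt k\,\eta_N\norm{w}$, bound the denominator by $\norm{v}^2=\norm{w}^2-\norm{(I-\Pi_N)w}^2\ge(1-k\eta_N^2)\norm{w}^2$, and bound the numerator by the triangle inequality for $a(\cdot,\cdot)^{1/2}$ together with $a(v,v)^{1/2}\le C_0\norm v$ and $a(w,w)\le\lambda_k\norm{w}^2$:
\[
a(v,v)^{1/2}\le a(w,w)^{1/2}+a\bigl((I-\Pi_N)w,(I-\Pi_N)w\bigr)^{1/2}\le\bigl(\sqrt{\lambda_k}+C_0\sqrt k\,\eta_N\bigr)\norm{w}.
\]
Dividing, $R(v)\le\lambda_k(1+C_1\eta_N)$ for all large $N$, with $C_1=C_1(\Omega,k)$; the degenerate case $\lambda_k=0$ forces $k=1$ with $w_1$ constant, which lies in $V_N$, so $\lambda_N^1=\lambda_1=0$. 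Since $v\in W_{k,N}$ was arbitrary, the min--max formula gives $\lambda_N^k\le\max_{v\in W_{k,N}}R(v)\le\lambda_k(1+C_1\eta_N)$.

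Finally I would estimate $\eta_N$, which is where the $N^{-2}$ rate comes from. On the periodic domain the equation $-\mathcal{L}w=\lambda w$ reads $C*w=(\beta-\lambda)w$ with $\beta=\int_{B_\delta(0)}C$; since $C\in C^1(\overline\Omega)$, convolution with $C$ gains one $L^2$-derivative, so a two-step bootstrap gives $w\in H^2(\Omega)$ (indeed $C^\infty$), and the classical Jackson estimate for best $L^2$-approximation of an $H^2$ function by trigonometric (resp.\ algebraic) polynomials of degree $\le N/2$ yields $\norm{(I-\Pi_N)w_j}\le cN^{-2}\norm{w_j}_{H^2}$; hence $\eta_N\le M/N^2$ for a constant $M=M(\Omega)$ after absorbing the finitely many small $N$ (for which~\eqref{eq:dist-eig} is vacuous once $M$ is large) and the fixed eigenfunctions. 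Inserting this into the previous bound and relabelling the constant completes the proof. The two places I expect real work are the numerator control --- showing that spectral truncation inflates the energy only by a lower-order amount, which genuinely uses $|\cdot|_V\sim\norm{\cdot}_{L^2}$ --- and the $O(N^{-2})$ approximation estimate, which rests on eigenfunction regularity and therefore on the $C^1$ hypothesis on the micromodulus $C$; in the pure Fourier/periodic case the argument collapses ($\Pi_N$ commutes with $\mathcal{L}$ and $\eta_N=0$ for $k\le N$, so $\lambda_N^k=\lambda_k$), so the bound really has content only for the Chebyshev method.
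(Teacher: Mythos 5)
Your argument is correct and reaches the stated bound, but it takes a genuinely different route through the key technical step. Both you and the paper run the standard min--max comparison on the image of $E_k=\mathrm{Span}\{w_1,\dots,w_k\}$ under a projection onto $V_N$; the difference is \emph{which} projection, and hence where the work lands. The paper projects with the energy (Ritz) projection $P_N$ defined by $\left(\mathcal{L}(v-P_Nv),v_N\right)=0$: this makes the numerator free, since $\left(\mathcal{L}(P_Nv),P_Nv\right)\le\left(\mathcal{L}v,v\right)$ exactly, but it pushes all the difficulty into the denominator, which is controlled by the $L^2$ estimate $\norm{v-P_Nv}_{L^2(\Omega)}\le M/N^2$ of Lemma~\ref{lm:L2proj}, itself proved by an Aubin--Nitsche duality argument resting on the interpolation estimate~\eqref{eq:est2} and the regularity estimate~\eqref{eq:reg}. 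You instead use the $L^2$-orthogonal projection $\Pi_N$: the denominator is then trivial by Pythagoras, and the work moves to the numerator, where you pay an $O(\eta_N)$ perturbation controlled by the triangle inequality for $a(\cdot,\cdot)^{1/2}$ together with the norm equivalence $|\cdot|_V\lesssim\norm{\cdot}_{L^2(\Omega)}$; the rate then comes from best-approximation of the (smooth, by your convolution bootstrap) eigenfunctions rather than from duality. Each approach has a distinct weak point: yours leans essentially on the equivalence of the $V$- and $L^2$-norms, so it would not survive a singular micromodulus where $V$ is a genuine fractional Sobolev space, whereas the paper's duality route is the one that generalizes there; conversely, your route avoids the paper's somewhat delicate use of the energy projection, which is only defined up to constants since the form $\left(-\mathcal{L}\cdot,\cdot\right)$ is merely positive semidefinite, and you handle the $\lambda_k=0$ mode explicitly where the paper glosses over it. Two further points in your favour: your constant's dependence on $k$ and $\lambda_k$ is made visible (the theorem only claims independence of $N$, so this is consistent), and your remark that in the pure Fourier case $\Pi_N$ commutes with $\mathcal{L}$ so that $\lambda_N^k=\lambda_k$ exactly is correct and matches the paper's explicit diagonalization in Section~\ref{sec:Fourier}. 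Both proofs ultimately import the $O(N^{-2})$ approximation rate from external spectral approximation theory at the same level of rigor, so I see no gap relative to the paper's own standard.
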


In order to prove the theorem, we need to introduce some definitions and to recall some regularity results.

Let start by introducing the projection operator with respect to the $V$-norm and the interpolant operator.

\begin{definition}
The projection operator is the function $P_N: V\to V_N$ which associates to every $v\in V$ the only function $P_N v\in V_N$ satisfying the following orthogonality condition
\begin{equation}
\label{eq:ort1}
%<v-P_N v, v_N> 
\left(\mathcal{L}(v-P_N v),v_N\right)=0,\quad\text{for all $v_N\in V_N$},
\end{equation}
or equivalently
\begin{equation}
\label{eq:ort2}
\norm{v-P_N v}_V = \inf_{v_N\in V_N} \norm{v-v_N}_V
\end{equation}
\end{definition}

\begin{definition}
Let $\{x_j\}_{j=1}^N$ be a partition of $\Omega$. The interpolant operator is the function $I_N: V\to V_N$ which associates to every $v\in V$ the interpolant of degree $N$ defined as follows
\begin{equation}
\label{eq:IN}
I_N v = \sum_{j=1}^N v(x_j)\psi_j,
\end{equation}
where $\{\psi_j\}_{j=1}^N$ is the orthogonal basis in $V$ we fixed for the approximation.
\end{definition}

\begin{remark}
Condition~\eqref{eq:ort2} ensures that
\begin{equation}
\label{eq:est1}
\norm{v-P_N v}_V \le \norm{v-I_N v}_V,\quad v\in V.
\end{equation}
\end{remark}

We now prove an interpolation estimate.
\begin{lemma}
Let $I_N$ be the interpolant operator defined in~\eqref{eq:IN}. Then, there exists a positive constant $M=M(\Omega)$ independent of $N$ such that
\begin{equation}
\label{eq:est2}
\norm{v- I_N v}_V \le \frac{M}{N^2}\norm{v}_{L^2(\Omega)}.
\end{equation}
\end{lemma}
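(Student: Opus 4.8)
The plan is to split the estimate into two independent ingredients: the equivalence of the $V$-norm and the $L^2(\Omega)$-norm recorded in Section~\ref{sec:theoretic}, and the classical approximation theory for the interpolation operator attached to the chosen orthogonal basis (Fourier trigonometric polynomials in the periodic case, Chebyshev polynomials otherwise). First I would observe that $I_N v\in V_N\subset V$, so $v-I_N v\in V$, and the norm equivalence gives
\[
\norm{v-I_N v}_V\le (1+M)\,\norm{v-I_N v}_{L^2(\Omega)}.
\]
Hence it suffices to prove an $L^2$-estimate of the form $\norm{v-I_N v}_{L^2(\Omega)}\le C\,N^{-2}\norm{v}_{L^2(\Omega)}$; the constant in~\eqref{eq:est2} is then $(1+M)C$, still depending only on $\Omega$.

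Second I would establish the $L^2$-interpolation bound. Writing $v-I_N v=(v-\Pi_N v)+(\Pi_N v-I_N v)$, where $\Pi_N$ denotes the $L^2(\Omega)$-orthogonal projection onto $V_N$, the first term is the tail of the expansion of $v$ in the basis $\{\psi_j\}$ and the second is an aliasing contribution. In the Fourier case both are controlled explicitly through the decay of the Fourier coefficients and the aliasing (Poisson-summation) identity, which yields $\norm{v-I_N v}_{L^2(\Omega)}\le C\,N^{-2}\,|v|_{H^2(\Omega)}$; in the Chebyshev case the weighted analogue holds, with the decay of the Chebyshev coefficients and the weighted seminorm $|v|_{H^2_\omega(\Omega)}$ playing the same roles. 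This step is standard and I would only sketch it, citing the usual spectral-approximation results.

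The main obstacle — and the only place where the operator $\mathcal L$ enters — is upgrading the right-hand side from a second-order Sobolev seminorm to $\norm{v}_{L^2(\Omega)}$. Here one uses the regularity forced by the smooth micromodulus function: since~\eqref{eq:est2} is ultimately applied to eigenfunctions $w$ of $-\mathcal L$, one rewrites the eigenvalue equation as $(\beta-\lambda)\,w(x)=\int_{\Omega\cap B_\delta(x)}C(x-x')\,w(x')\,dx'$ and, exploiting $C\in C^1(\overline\Omega)$ together with the compact support of the kernel in $B_\delta$, differentiates under the integral sign to bound $|w|_{H^2(\Omega)}$ (or its weighted version) by $C(\norm{C}_{C^1},\delta,\Omega)\,\norm{w}_{L^2(\Omega)}$, the mild dependence on the fixed eigenvalue $\lambda$ being absorbed into $M$. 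Combining this a priori bound with the interpolation estimate of the previous paragraph, and with the reduction of the first paragraph, gives~\eqref{eq:est2}. I expect this regularity/bootstrap step to be the delicate point, since $C$ is assumed only $C^1$, so some care is needed about how much smoothness of $w$ is genuinely available to sustain the $N^{-2}$ rate and about whether the boundary of the truncated neighbourhood $\Omega\cap B_\delta(x)$ produces extra terms requiring separate treatment.
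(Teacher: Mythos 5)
Your first two steps reproduce the paper's proof almost verbatim: the paper likewise bounds $\norm{v-I_Nv}_V$ by a constant times $\norm{v-I_Nv}_{L^2(\Omega)}$ (using the boundedness of $C$ in the seminorm \eqref{eq:seminorm}, which is the same content as your norm-equivalence reduction), and then simply cites the ``spectral properties of the interpolant operator'' to assert $\norm{v-I_Nv}_{L^2(\Omega)}\le M N^{-2}\norm{v}_{L^2(\Omega)}$. You are right to be suspicious of that last citation: the standard spectral estimates control the interpolation error by $N^{-2}\abs{v}_{H^2(\Omega)}$ (or its weighted Chebyshev analogue), not by $\norm{v}_{L^2(\Omega)}$, and indeed the statement as written cannot hold for arbitrary $v\in V$, since $V$ coincides with $L^2(\Omega)$ up to norm equivalence and one can choose $v$ oscillating at frequency $\sim N$ to defeat any such bound. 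So the point you isolate as ``the main obstacle'' is exactly the gap the paper papers over, and your proposal goes further than the printed proof.

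That said, your repair has two genuine weaknesses. First, the bootstrap from $(\beta-\lambda)w=\int_{\Omega\cap B_\delta(\cdot)}C(\cdot-x')w(x')\,dx'$ produces constants proportional to powers of $1/\abs{\beta-\lambda}$, and the eigenvalues of this operator accumulate at $\beta=\int_{B_\delta(0)}C$ (this is visible in the paper's Table of computed eigenvalues, all of which cluster at $\beta\approx 4$); hence the resulting $M$ is \emph{not} uniform in $k$, and cannot be absorbed into a constant ``depending only on $\Omega$'' as the lemma claims. Second, restricting to eigenfunctions does not cover the lemma's actual use: in the proof of Lemma~\ref{lm:L2proj} the estimate \eqref{eq:est2} is applied to the solution $w$ of $-\mathcal{L}w=v-P_Nv$ with a right-hand side that is merely $L^2$, and since $-\mathcal{L}$ is a bounded operator with no smoothing (the regularity estimate \eqref{eq:reg} gains no derivatives), no analogous $H^2$ control of that $w$ by $\norm{v-P_Nv}_{L^2(\Omega)}$ is available. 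So your diagnosis of the delicate point is correct and more honest than the paper's proof, but the proposed regularity/bootstrap step does not fully close the lemma as stated or as used.
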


\begin{proof}
We have
\begin{align*}
\norm{v- I_N v}_V^2 &\le \int_\Omega\int_\Omega C(x'-x)\left((v-I_N v)(x')-(v-I_N v)(x)\right)^2dx'dx\\
&\le M\norm{v-I_N v}_{L^2(\Omega)}^2\le\frac{M}{N^4}\norm{v}_{L^2(\Omega)}^2,
\end{align*}
where the last inequality is a consequence of the spectral properties of the interpolant operator (see\cite{Canuto2006}).
\end{proof}

We recall the regularity result, (see~\cite{Caffarelli}): given a function $f\in L^2(\Omega)$, let $w\in V$ be the solution of the boundary-valued problem
\[
\begin{cases}
-\mathcal{L}w = f &\text{in $\Omega$}\\
w=0 &\text{on $\partial \Omega$}.
\end{cases}
\]
Then, the following regularity estimate holds
\begin{equation}
\label{eq:reg}
%|w|_V \le M \norm{f}_{L^2(\Omega)},
\norm{w}_{L^2(\Omega)} \le M \norm{f}_{L^2(\Omega)},
\end{equation}
for some positive constant $M=M(\Omega)$.

The next lemma studies the $L^2$-convergence of the projection operator.
\begin{lemma}
\label{lm:L2proj}
Let $v\in V$ be an eigenfunction of the problem~\eqref{eq:variat}. Then, there is a positive constant $M$ independent of $N$ such that
\begin{equation}
\label{eq:est3}
\norm{v-P_N v}_{L^2(\Omega)} \le \frac{M}{N^2}.
\end{equation}
\end{lemma}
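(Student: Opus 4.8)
The plan is to use a standard Aubin--Nitsche duality argument adapted to the nonlocal operator $\mathcal{L}$. Since $v$ is an eigenfunction of the continuous problem~\eqref{eq:variat}, say with eigenvalue $\lambda$, we have $-\mathcal{L}v = \lambda v$ in the weak sense. First I would introduce the auxiliary dual problem: let $g = v - P_N v \in L^2(\Omega)$ and let $\phi \in V$ solve $-\mathcal{L}\phi = g$ with homogeneous boundary data. By the regularity estimate~\eqref{eq:reg} we get $\norm{\phi}_{L^2(\Omega)} \le M \norm{g}_{L^2(\Omega)} = M\norm{v - P_N v}_{L^2(\Omega)}$.

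Next I would exploit the defining orthogonality~\eqref{eq:ort1} of the projection. Testing the dual equation with $g = v - P_N v$ and using the self-adjointness of the energetic extension~\eqref{eq:energ-ext},
\begin{align*}
\norm{v - P_N v}_{L^2(\Omega)}^2 &= (g, v - P_N v) = -\left(\mathcal{L}\phi, v - P_N v\right)\\
&= -\left(\mathcal{L}(v - P_N v), \phi\right) = -\left(\mathcal{L}(v - P_N v), \phi - P_N \phi\right),
\end{align*}
where the last equality uses~\eqref{eq:ort1} with the test function $P_N\phi \in V_N$. Then I would bound the bilinear form by the $V$-seminorms, $\left|\left(\mathcal{L}(v - P_N v), \phi - P_N \phi\right)\right| \le \tfrac12 |v - P_N v|_V \, |\phi - P_N \phi|_V$, which follows from the Cauchy--Schwarz inequality applied to the double integral in~\eqref{eq:energ-ext} together with the symmetry of $C$.

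It remains to estimate the two seminorm factors. For $\phi$ I would combine~\eqref{eq:est1} and~\eqref{eq:est2} to get $\norm{\phi - P_N\phi}_V \le \norm{\phi - I_N\phi}_V \le \tfrac{M}{N^2}\norm{\phi}_{L^2(\Omega)} \le \tfrac{M}{N^2}\norm{v - P_N v}_{L^2(\Omega)}$, using the regularity bound on $\norm{\phi}_{L^2(\Omega)}$. For $v$, the same chain gives $\norm{v - P_N v}_V \le \tfrac{M}{N^2}\norm{v}_{L^2(\Omega)}$; since $v$ is a fixed eigenfunction (we may normalize $\norm{v}_{L^2(\Omega)} = 1$), this factor is $O(N^{-2})$ and in particular bounded. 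Putting everything together,
\[
\norm{v - P_N v}_{L^2(\Omega)}^2 \le \frac{M}{N^2}\,\norm{v - P_N v}_{L^2(\Omega)} \cdot \frac{M}{N^2}\,\norm{v - P_N v}_{L^2(\Omega)},
\]
and after dividing by $\norm{v - P_N v}_{L^2(\Omega)}^2$ one is left with a bound of the form $1 \le M/N^4$, which is too strong to be literally correct — so in the write-up I would instead keep only one factor of $\norm{v-P_N v}_{L^2(\Omega)}$ on the right (using the crude bound $\norm{v-P_N v}_V \le (1+M)\norm{v-P_N v}_{L^2(\Omega)}$ for the $v$-factor) and the $O(N^{-2})$ gain from the $\phi$-factor, yielding $\norm{v - P_N v}_{L^2(\Omega)} \le \tfrac{M}{N^2}$ after cancelling one power of the norm. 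The main obstacle, and the step requiring care, is exactly this bookkeeping: making sure duality is set up so that precisely one factor of $N^{-2}$ survives (from the regularity-plus-interpolation estimate on the dual solution $\phi$) and the constant genuinely does not depend on $N$ — the norm equivalence $\norm{\cdot}_V \sim \norm{\cdot}_{L^2(\Omega)}$ and the boundedness of the energetic bilinear form are what make the constants uniform.
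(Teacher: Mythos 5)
Your proposal is the same Aubin--Nitsche duality argument the paper uses: you introduce the same dual problem $-\mathcal{L}\phi = v - P_N v$, invoke the regularity estimate~\eqref{eq:reg}, exploit the Galerkin orthogonality~\eqref{eq:ort1} to subtract an element of $V_N$ from the dual solution, and conclude via the interpolation estimate~\eqref{eq:est2}; the only cosmetic difference is that you subtract $P_N\phi$ and then pass to $I_N\phi$ via~\eqref{eq:est1}, where the paper subtracts $I_N w$ directly.

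The one loose end is the final bookkeeping, which you flag yourself but do not quite close. After the corrected combination you arrive at $\norm{v-P_N v}_{L^2(\Omega)}^2 \le \tfrac{M}{N^2}\norm{v-P_N v}_{L^2(\Omega)}\cdot\norm{v-P_N v}_V$, and ``cancelling one power of the norm'' leaves $\norm{v-P_N v}_{L^2(\Omega)} \le \tfrac{M}{N^2}\norm{v-P_N v}_V$, not yet the claim. If, as you propose, you then bound $\norm{v-P_N v}_V$ by $(1+M)\norm{v-P_N v}_{L^2(\Omega)}$ you only obtain $\norm{v-P_N v}_{L^2(\Omega)} \le \tfrac{M}{N^2}\norm{v-P_N v}_{L^2(\Omega)}$, which still needs an a priori bound on the right-hand factor to give~\eqref{eq:est3}. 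The clean way to finish --- and what the paper implicitly does, its reference to~\eqref{eq:est3} at that point being circular --- is to keep the remaining factor in the $V$-norm and estimate it by~\eqref{eq:est1} and~\eqref{eq:est2}: $\norm{v-P_N v}_V \le \norm{v-I_N v}_V \le \tfrac{M}{N^2}\norm{v}_{L^2(\Omega)}$, which is a constant (indeed $O(N^{-2})$) once the eigenfunction is normalized. This is exactly the point where the hypothesis that $v$ is a fixed eigenfunction enters, and it is the step your write-up should make explicit.
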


\begin{proof}
Let $w\in V$ be the weak solution of the problem
\begin{equation*}
-\mathcal{L}w = v- P_N v.
\end{equation*}
Thus, for every $\tilde{v}\in V$, there holds
\begin{equation}
\label{eq:est6}
%<w,\tilde{v}> 
\left(\mathcal{L}w,\tilde{v}\right)= (v-P_N v,\tilde{v}).
\end{equation}

If we choose $\tilde{v}= v - P_N v$, since $I_N w$ and $v-P_N v$ are orthogonal, thanks to~\eqref{eq:est6}, we obtain
\begin{align}
\label{est4}
\norm{v-P_N v}_{L^2(\Omega)}^2 &= 
%<w, v-P_N v> = <w -I_N w, v-P_N v>\\ 
\left(\mathcal{L}w,v-P_N v\right) = \left(\mathcal{L}(w-I_N w),v-P_N v\right)\\
&\le \norm{w -I_N w}_V\norm{v-P_N v}_V. \notag
\end{align}
The interpolant estimate~\eqref{eq:est2} and the regularity estimate~\eqref{eq:reg} give us
\begin{equation}
\label{eq:est5}
\norm{w -I_N w}_V \le \frac{M}{N^2} \norm{w}_{L^2(\Omega)}\le\frac{M}{N^2} \norm{v-P_Nv}_{L^2(\Omega)}.
\end{equation}
Finally, using~\eqref{eq:est3}, we get the claim.
\end{proof}

We are now ready to prove Theorem~\ref{th:conv}.

\begin{proof}[Theorem~\ref{th:conv}]
Let $w_k$ be the eigenfunction of the continuous problem~\eqref{eq:variat} corresponding to the $k$-th smallest eigenvalue $\lambda_k$. We consider the $k$-dimensional subspaces
\[E_k=Span\{w_1,\cdots,w_k\},\quad \tilde{E}_k = P_N E_k.\]
If $v\in E_k$, with $v\ne0$, then
\begin{equation}
\label{eq:normPN}
\norm{P_N v}\ge \norm{v}-\norm{v-P_N v}\ge\left(1-\frac{M}{N^2}\right)\norm{v}>0.
\end{equation}
Therefore, $P_N$ is an isomorphism, and as a consequence the spaces $E_k$ and $\tilde{E}_k$ have the same dimension.

Thanks to Theorem~\ref{th:minmax}, we find
\begin{align*}
\lambda_k\le\lambda_N^k&\le \max_{\stackrel{v_N\in\tilde{E}_k}{v_N\ne0}}%\frac{<v_N,v_N>}{\norm{v_N}^2} = \max_{\stackrel{v\in E_k}{v\ne0}}\frac{<P_N v,P_N v>}{\norm{P_N v}^2}\\
\frac{\left(\mathcal{L}v_N,v_N\right)}{\norm{v_N}^2} = \max_{\stackrel{v\in E_k}{v\ne0}}\frac{\left(\mathcal{L}(P_N v),P_N v\right)}{\norm{P_N v}^2}\\
%&\le\max_{\stackrel{v\in E_k}{v\ne0}}\frac{<v,v>}{\norm{P_N v}^2}\le\underbrace{\max_{\stackrel{v\in E_k}{v\ne0}}\frac{<v,v>}{\norm{v}^2}}_{{}=\lambda_k}\max_{\stackrel{v\in E_k}{v\ne0}}\frac{\norm{v}^2}{\norm{P_N v}^2}.
&\le\max_{\stackrel{v\in E_k}{v\ne0}}\frac{\left(\mathcal{L}v,v\right)}{\norm{P_N v}^2}\le\underbrace{\max_{\stackrel{v\in E_k}{v\ne0}}\frac{\left(\mathcal{L}v,v\right)}{\norm{v}^2}}_{{}=\lambda_k}\max_{\stackrel{v\in E_k}{v\ne0}}\frac{\norm{v}^2}{\norm{P_N v}^2}.
\end{align*}
Finally, using~\eqref{eq:normPN}, we get
\[\lambda_k\le\lambda_N^k\le\lambda_k\left(1+\frac{M}{N^2}\right),\]
and this concludes the proof.
\end{proof}

We can also derive a convergence result for the eigenfunctions.
\begin{theorem}
\label{th:conveigenf}
Let $(\lambda_k,w_k)$ and $(\lambda_N^k,w_N^k)$ be the $k$-th smallest eigenpair corresponding to the continuous problem~\eqref{eq:variat} and to the discrete problem~\eqref{eq:variat-discr}, respectively. Then, there exists a positive constant $M = M(\Omega)$ independent of $N$ such that
\begin{equation}
\label{eq:dist-eigenf}
\norm{w_k-w_N^k}_{L^2(\Omega)} \le \frac{M}{N^2}.
\end{equation}
\end{theorem}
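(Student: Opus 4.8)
The plan is to transfer the convergence rate for eigenvalues (Theorem~\ref{th:conv}) into an $L^2$-convergence rate for eigenfunctions by a standard spectral-approximation argument, using the projection operator $P_N$ and the already-established estimate of Lemma~\ref{lm:L2proj}. First I would decompose the error $w_k-w_N^k$ using the triangle inequality along the intermediate element $P_N w_k\in V_N$:
\[
\norm{w_k-w_N^k}_{L^2(\Omega)}\le \norm{w_k-P_N w_k}_{L^2(\Omega)}+\norm{P_N w_k-w_N^k}_{L^2(\Omega)}.
\]
The first term is already $O(N^{-2})$ by Lemma~\ref{lm:L2proj}, so everything reduces to controlling $\norm{P_N w_k-w_N^k}_{L^2(\Omega)}$, i.e. the distance between the Galerkin projection of the exact eigenfunction and the actual discrete eigenfunction.

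Next I would exploit that both $P_N w_k$ and $w_N^k$ live in the finite-dimensional space $V_N$ on which $-\mathcal{L}$ acts as a symmetric positive operator with the discrete eigenbasis $\{w_N^j\}_{j}$. Expanding $P_N w_k-w_N^k$ (after the usual normalization $\|w_N^k\|=1$ and choosing the sign so that $(P_N w_k, w_N^k)\ge 0$) in this orthonormal basis, the component along $w_N^k$ is controlled by $\|P_N w_k\|$, which by \eqref{eq:normPN} differs from $1$ by $O(N^{-2})$; the components transverse to $w_N^k$ are governed by a spectral-gap estimate of the form
\[
\bigl(\text{dist to }\operatorname{span}\{w_N^k\}\bigr)\;\lesssim\;\frac{\norm{(\mathcal{L}+\lambda_k)(P_N w_k - w_N^k)}}{\operatorname{gap}_k},
\]
where $\operatorname{gap}_k=\min_{j\ne k}|\lambda_N^j-\lambda_k|$. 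To estimate the numerator one writes, for any $v_N\in V_N$,
\[
\bigl(\mathcal{L}P_N w_k+\lambda_k P_N w_k,\,v_N\bigr)=\bigl(\mathcal{L}(P_N w_k-w_k),v_N\bigr)+\bigl((\lambda_k-\lambda_N^k)\,\text{stuff}\,,v_N\bigr),
\]
using the orthogonality property \eqref{eq:ort1} of $P_N$, the eigen-relations \eqref{eq:variat} and \eqref{eq:variat-discr}, the $V$-$L^2$ norm equivalence, the interpolation estimate \eqref{eq:est2}, and the eigenvalue rate $|\lambda_k-\lambda_N^k|\le \lambda_k M/N^2$ from Theorem~\ref{th:conv}. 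Each contribution is $O(N^{-2})$, so $\norm{P_N w_k-w_N^k}_{L^2(\Omega)}=O(N^{-2})$, and combining with the first term finishes the proof.

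The main obstacle is the spectral-gap step: the clean rate $O(N^{-2})$ for the transverse part of $P_N w_k-w_N^k$ requires $\lambda_k$ to be a simple eigenvalue (or at least that $\operatorname{gap}_k$ stays bounded below uniformly in $N$, which follows from $\lambda_N^j\to\lambda_j$ once the eigenvalues are distinct). If $\lambda_k$ is multiple, the natural object is the distance between the exact and discrete eigenspaces rather than individual eigenfunctions, and the statement must be read up to a choice of basis in the eigenspace; I would either add the simplicity hypothesis explicitly or phrase the conclusion in terms of $\operatorname{dist}_{L^2}(w_k,\,\text{discrete eigenspace})$. A secondary technical point is justifying the normalization and sign choice for $w_N^k$ so that $\norm{P_N w_k - w_N^k}$ is genuinely small (and not close to $\norm{P_N w_k + w_N^k}$); this is routine but should be stated. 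Apart from these, all the ingredients — norm equivalence, \eqref{eq:est2}, \eqref{eq:est3}, Theorem~\ref{th:conv}, and the Galerkin orthogonality — are already in place, so the remaining work is bookkeeping.
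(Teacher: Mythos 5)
Your proposal follows essentially the same route as the paper's proof: the paper likewise decomposes the error through $P_N w_k$, invokes Lemma~\ref{lm:L2proj} for $\norm{w_k - P_N w_k}_{L^2(\Omega)}$, and splits the remainder into the component along $w_N^k$ (controlled by the normalization and sign choice, exactly as you describe) and the transverse components, which it bounds by the gap quantity $\theta_N^k = \max_{i\ne k}\lambda_k/|\lambda_k-\lambda_N^i|$ via the componentwise identity $\left(P_N w_k, w_N^i\right) = \tfrac{\lambda_k}{\lambda_N^i}\left(w_k,w_N^i\right)$ --- the same residual/spectral-gap step you outline. The caveats you flag (simple multiplicity, the sign of $w_N^k$, and the gap staying bounded below) are precisely the assumptions the paper also makes, so your plan matches the paper's argument in substance.
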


\begin{proof}
For simplicity we assume that $\lambda_k$ and $\lambda_N^k$ have algebraic multiplicity equals to one. The proof of the general case is similar.

We define
\begin{equation}
\label{eq:vNk}
v_N^k = \left(P_N w_k, w_N^k\right)w_N^k,
\end{equation}
and
\begin{equation}
\label{eq:teta}
\theta_N^k = \max_{i\ne k} \frac{\lambda_k}{|\lambda_k-\lambda_N^i|}.
\end{equation}

We have
\begin{align}
\label{eq:est7}
\norm{w_k-w_N^k}_{L^2(\Omega)} &\le \norm{w_k - P_N w_k}_{L^2(\Omega)} +\norm{P_N w_k - v_N^k}_{L^2(\Omega)}\\
& + \norm{v_N^k -w_N^k}_{L^2(\Omega)}.\notag
\end{align}

Let us estimate the term $\norm{v_N^k -w_N^k}_{L^2(\Omega)}$. As long as we normalize the discrete and the continuous eigenfunctions, by~\eqref{eq:vNk} we have
\[\norm{v_N^k}_{L^2(\Omega)} = |\left(P_N w_k,w_N^k\right)|,\]
and
\begin{equation*}
v_N^k -w_N^k = \left(1-\left(P_N w_k,w_N^k\right)\right)w_N^k,
\end{equation*}
so that
\[
\norm{v_N^k -w_N^k}_{L^2(\Omega)} = |1-\left(P_N w_k,w_N^k\right)|.
\]
If we choose the sign of $w_N^k$ such that the inner product $\left(P_N w_k,w_N^k\right)\ge0$, then we obtain
\begin{align*}
\norm{v_N^k -w_N^k}_{L^2(\Omega)} &= |1-\left(P_N w_k,w_N^k\right)|\\
&=\bigl|1-|\left(P_N w_k,w_N^k\right)|\bigr|\\
%&= \bigl|\norm{w_k}_{L^2(\Omega)} - |\left(P_N w_k,w_N^k\right)|\bigr|\\
&= \bigl|\norm{w_k}_{L^2(\Omega)} - \norm{v_N^k}_{L^2(\Omega)} \bigr|\\
&\le \norm{w_k- v_N^k}_{L^2(\Omega)}.
\end{align*}
Therefore,
\begin{align}
\label{eq:est8}
\norm{v_N^k -w_N^k}_{L^2(\Omega)} &\le \norm{w_k- v_N^k}_{L^2(\Omega)}\\
& \le \norm{w_k- P_N w_k}_{L^2(\Omega)} +\norm{ P_N w_k - v_N^k}_{L^2(\Omega)}\notag.
\end{align}

Thanks to~\eqref{eq:est8}, equation~\eqref{eq:est7} becomes
\begin{equation}
\label{eq:est9}
\norm{w_k-w_N^k}_{L^2(\Omega)} \le 2\norm{w_k - P_N w_k}_{L^2(\Omega)} +2\norm{P_N w_k - v_N^k}_{L^2(\Omega)}.
\end{equation}
Lemma~\ref{lm:L2proj} implies that there exists a positive constant $M=M(\Omega)$ independent of $N$ such that
\begin{equation}
\label{eq:est10}
\norm{w_k - P_N w_k}_{L^2(\Omega)}\le \frac{M}{N^2}.
\end{equation}
Finally, let $i\ne k$. Since
\begin{align*}
\left(P_N w_k, w_N^i\right) &=\frac{1}{\lambda_N^i}
%<P_N w_k,w_N^i> = \frac{1}{\lambda_N^i} <w_k,w_N^i>\\
\left(\mathcal{L}(P_N w_k),w_N^i\right) = \frac{1}{\lambda_N^i} \left(\mathcal{L}w_k,w_N^i\right)\\
&=\frac{\lambda_k}{\lambda_N^i}\left(w_k,w_N^i\right),
\end{align*}
we find
\begin{align*}
\left(w_k- P_N v_k,v_N^i\right)&= \left(w_k,w_N^i\right) - \left(P_N w_k,w_N^i\right)\\
&= \left(\frac{\lambda_N^i - \lambda_k}{\lambda_k}\right)\left(P_N w_k,w_N^i\right).
\end{align*}
So, using~\eqref{eq:teta}
\[
\bigl|\left(P_N w_k,w_N^i\right)\bigr| \le \theta_N^k \bigl|\left(w_k- P_N v_k,v_N^i\right)\bigr|.
\]
As a consequence,
\begin{align}
\label{eq:est11}
\norm{P_n w_k - v_N^k}_{L^2(\Omega)}^2 &=\sum_{i\ne k}\left(P_N w_k, w_N^i\right)^2\\
&\le (\theta_N^k)^2\sum_{i\ne k}\left(w_k-P_N w_k,w_N^i\right)^2\notag\\
&\le (\theta_N^k)^2\norm{w_k-P_N w_k}_{L^2(\Omega)}^2\le\frac{M}{N^4}.\notag
\end{align}
Collecting~\eqref{eq:est10} and~\eqref{eq:est11} into~\eqref{eq:est9}, we get the claim.
\end{proof}

In the following sections, we study the problem by considering the method of Fourier trigonometric and the Chebyshev polynomials as basis. The first choice is suitable in presence of periodic boundary condition, while the second one can be applied to non-periodic problem.

%\textcolor{red}{
\begin{remark}
This work focuses on the one-dimensional peridynamic model, however, the variational approach can be easily extended to higher dimensional problems, by using other functional spaces and test functions. For instance, for the bi-dimensional case, we can implement the 2-D fast Fourier Transform as done in~\cite{LP2021}.
\end{remark}
%}

\subsection{Spectral approximation by Fourier trigonometric polynomials}
\label{sec:Fourier}

Let $\Omega=[0,2\pi]$ be the computational domain. Assume that the problem is subject to periodic boundary conditions $w(0) = w(2\pi)$. 

Let $N>0$ be an even number and $\{\psi_k(x)\}_{|k|\le N/2}$ be an orthogonal basis of Fourier trigonometric polynomials in $L^2([0,2\pi])$ such that
\begin{equation}
\label{eq:trig}
\psi_k(x)=e^{ikx},\quad x\in\Omega,
\end{equation}
with $\psi_k(0)=\psi_k(2\pi)$, for all $k=-N/2,\cdots,N/2$.

We seek an approximation of $w(x)$ in form of real-valued Fourier trigonometric polynomials
\begin{equation}
\label{eq:Ftransftrig}
w_N(x)=\sum_{|h|\le N/2}\tilde{w}_h\psi_h(x).
\end{equation}

Then, using the same computation as before, namely, substituting~\eqref{eq:Ftransftrig} and~\eqref{eq:trig} into~\eqref{eq:variational}, we obtain the eigenvalue problem defined in~\eqref{eq:geneigpb}, where matrix values of $A$ and $M$ have the following expression
%the eigenvalue problem~\eqref{eq:eigpb} is equivalent to find a function $w$, with $w(0) = w(2\pi)$, such that 
%\begin{equation}
%\label{eq:variational}
%\int_0^{2\pi}\left(-\int_{B_{\delta}(x)} C(x-x')\left(w(x')-w(x)\right)dx'-\lambda w(x)\right)\psi_k(x)\,dx =0.
%\end{equation}
%
%We seek an approximation of $w(x)$ in form of real-valued trigonometric polynomials
%\begin{equation}
%\label{eq:Ftransf}
%w_N(x)=\sum_{|h|\le N/2}\tilde{w}_h\psi_h(x).
%\end{equation}
%Substituting~\eqref{eq:Ftransf} into~\eqref{eq:variational}, we get
%\begin{align}
%\label{eq:subs}
%0&=\int_0^{2\pi}\left(-\int_{B_{\delta}(x)}C(x-x')\left(\sum_{|h|\le N/2}\tilde{w}_h(\psi_h(x')-\psi_h(x))\right)dx'\right)\psi_k(x)\,dx\\
%&-\int_0^{2\pi}\lambda\sum_{|h|\le N/2}\tilde{w}_h\psi_h(x)\psi_k(x)\,dx\notag\\
%&=\sum_{|h|\le N/2}\tilde{w}_h\left(-\int_0^{2\pi}\psi_k(x)\int_{B_{\delta}(x)}C(x-x')(\psi_h(x')-\psi_h(x))dx'dx\right)\notag\\
%&-\lambda\sum_{|h|\le N/2}\int_0^{2\pi}\psi_h(x)\psi_k(x)\,dx,\notag
%\end{align}
 %for all $k=-N/2,\cdots, N/2$.
%
%In matrix notation, equation~\eqref{eq:subs} gives the following generalized eigenvalue problem
%\begin{equation}
%\label{eq:geneigpb}
%A\tilde{w} = \lambda M \tilde{w},
%\end{equation}
%where $A=(a_{hk})_{h,k}$ and $M=(m_{hk})_{h,k}$, with
\begin{align*}
a_{hk}&=-\int_0^{2\pi}\psi_k(x)\int_{\Omega\cap B_{\delta}(x)}C(x-x')(\psi_h(x')-\psi_h(x))\,dx'dx\\
&=-\int_0^{2\pi}\psi_k(x)\psi_h(x)dx\,\int_{\Omega\cap B_{\delta}(0)}C(x')\cos(hx')dx'+2\pi\beta\delta_{h,-k}\\
&=-2\pi\delta_{h,-k}\int_{\Omega\cap B_{\delta}(0)}C(x')\cos(hx')dx'+2\pi\beta\delta_{h,-k},
\end{align*}
\[m_{hk}=\int_0^{2\pi}\psi_h(x)\psi_k(x)\,dx = 2\pi \delta_{h,-k},\]
and $\delta_{h,-k}$ is the Kronecker delta.

More in details, the matrices $A$ and $M$ have the following representation
\begin{equation*}
A=
\begin{bmatrix}
0& & *\\
& \iddots&\\
*& &0
\end{bmatrix},\qquad 
M=\begin{bmatrix}
0& & *\\
& \iddots&\\
*& &0
\end{bmatrix}.
\end{equation*}

We can observe that, in this case the matrices $A$ and $M$ are anti-diagonal and the product $M^{-1}A$ is a diagonal definite-positive matrix. Moreover, the integral $\gamma_h=\int_{\Omega\cap B_{\delta}(0)}C(x')\cos(hx')dx'$ which appear in the definition of $a_{hk}$ can be accurately computed by means of the Fast Fourier transform (FFT) as $C$ is an even real-valued function.

Additionally, when $h=0$, such integral is equal to $\beta$, therefore the matrix $A$ has a zero value on its diagonal, so $\lambda=0$ is an eigenvalue for the peridynamic operator. 

Since the matrices $A$ and $M$ are anti-diagonal in this setting, and since the product of two anti-diagonal matrices gives a diagonal matrix, a complete derivation of the spectrum of the peridynamic operator is possible in this case. Indeed we have
\[
\lambda_h = \beta - \gamma_h^N,\quad h=-N/2,\cdots,N/2,
\]
where $\gamma_h^N$ is the discrete Fourier transform of $\gamma_h$ and is less than $\beta$. They corresponds to the diagonal values of the matrix $M^{-1} A$. Moreover, the corresponding eigenfunctions are given by
\[
w_N^h(x) = \psi_h(x) = e^{ihx},\qquad x\in\Omega,\quad h=-N/2, \dots, N/2.
\]

Because the discrete sine and cosine functions of different frequencies are mutually orthogonal, we find $N$ linearly independent eigenfunctions. Note that, except for $\lambda_0$, the eigenvalues have multiplicity higher than one because of the symmetry between $h$ and $-h$.

We now compare the discrete eigenvalues with those of the continuous problem, found in~\eqref{eq:Fouriermultip}. The following theorem establishes the estimate for the distance between the continuous eigenvalue and its Fourier trigonometric approximation. This is a trivial consequence of the estimates of the distance between a function and its Fourier approximation.

\begin{theorem}[see~\cite{Canuto2006}]
\label{th:dist-trig}
Let $\lambda_k$ and $\lambda_N^k$ be the $k$-th smallest eigenvalues of the continuous and discrete eigenvalue problem respectively. If the micromodulus function $C\in L^2([0,2\pi])\cap\mathcal{C}^1([0,2\pi])$, then there exists a positive constant $M$ independent of $N$ such that
\begin{equation}
\label{eq:dist-trig}
|\lambda_k - \lambda_N^k| \le \frac{M}{N^2} \norm{C'}_{L^2([0,2\pi])}.
\end{equation}
\end{theorem}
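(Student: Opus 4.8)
The plan is to identify the $k$-th discrete eigenvalue with $\beta - \gamma^N_{h(k)}$ for the appropriate frequency and the $k$-th continuous eigenvalue with the Fourier multiplier $\lambda(h) = \int_{B_\delta(0)} C(y)(1-\cos(hy))\,dy = \beta - \gamma_h$, so that the eigenvalue discrepancy reduces \emph{exactly} to the discrepancy between the integral $\gamma_h = \int_{\Omega\cap B_\delta(0)} C(x')\cos(hx')\,dx'$ and its discrete (trapezoidal / DFT) counterpart $\gamma^N_h$. Since both the continuous and discrete spectra are indexed by the same integer frequencies $h \in \{-N/2,\dots,N/2\}$, sorting them produces the same ordering (both $\beta - \gamma_h$ and $\beta - \gamma^N_h$ are increasing in $|h|$ for the relevant kernels), hence $|\lambda_k - \lambda_N^k| = |\gamma_{h(k)} - \gamma^N_{h(k)}|$ where $h(k)$ is the frequency attached to the $k$-th slot.

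First I would make the reduction above precise: write $\lambda_N^k = \beta - \gamma^N_{h}$ and $\lambda_k = \beta - \gamma_h$ for a common $h$, so $|\lambda_k - \lambda_N^k| = |\gamma_h - \gamma^N_h|$. Second, I would observe that $\gamma_h$ is (up to a constant) the $h$-th Fourier coefficient of the $2\pi$-periodic extension of $C$ (extended by zero outside $B_\delta(0)$, or rather of the function $x'\mapsto C(x')\mathbf{1}_{\Omega\cap B_\delta(0)}(x')$), and $\gamma^N_h$ is the corresponding discrete Fourier coefficient computed from $N$ equispaced samples. The quantity $|\gamma_h - \gamma^N_h|$ is then controlled by the classical aliasing estimate for truncated/discrete Fourier transforms. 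Third, I would invoke the standard spectral-accuracy bound from Canuto--Hussaini--Quarteroni--Zang (the cited \cite{Canuto2006}): for a function in $H^s$ the error between a Fourier coefficient and its DFT approximation (equivalently, between the function and its interpolant) decays like $N^{-s}\|C\|_{H^s}$; here the hypothesis $C\in L^2\cap\mathcal{C}^1$ gives $s=1$ for $C'$, but because the eigenvalue involves $\gamma_h$ paired against a smooth trigonometric factor one gains an extra order, yielding the $N^{-2}$ rate with $\|C'\|_{L^2}$ on the right-hand side. Concretely, I expect to write $|\gamma_h - \gamma_h^N| \le \frac{M}{N^2}\|C'\|_{L^2([0,2\pi])}$ by applying the interpolation-error estimate already used in the proof of Lemma on $\|v - I_N v\|_V$, now with $v$ essentially $C$ and using its $\mathcal{C}^1$ regularity to get two powers of $N$.

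The step I expect to be the main obstacle is the bookkeeping that makes the index matching $\lambda_k \leftrightarrow \lambda_N^k$ legitimate \emph{and} preserves the rate: one must check that the map $k\mapsto h(k)$ is the same for both spectra (this is where the monotonicity of $h\mapsto \beta-\gamma_h$ and the $h\leftrightarrow -h$ symmetry, already noted in the text, are used) and that no crossing or reordering occurs between the continuous and discrete sequences for $N$ large. Once that is granted, the estimate is, as the authors say, a ``trivial consequence'' of standard Fourier approximation theory: it suffices to cite the aliasing/interpolation bound in \cite{Canuto2006} applied to $C$ under the stated regularity, so I would not grind through the constants — the constant $M$ absorbs the $2\pi$ factors, the bound $\gamma_h^N < \beta$, and the Sobolev embedding constants. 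The remaining routine part is simply noting that $\beta$ is common to both expressions and cancels, leaving precisely $|\gamma_h - \gamma_h^N|$ to be bounded.
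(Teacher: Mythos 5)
Your overall strategy coincides with the paper's: the paper gives no actual proof of Theorem~\ref{th:dist-trig}, only the remark that it is ``a trivial consequence of the estimates of the distance between a function and its Fourier approximation'' together with the citation of \cite{Canuto2006}. Your reduction is the correct way to make that remark precise: since $A$ and $M$ are anti-diagonal and $M^{-1}A$ is diagonal, the discrete spectrum is exactly $\{\beta-\gamma_h^N\}$ while the continuous one is $\{\beta-\gamma_h\}$ via the Fourier multipliers~\eqref{eq:Fouriermultip}, so after the index matching (which your monotonicity-in-$|h|$ observation handles, modulo the double multiplicity from $h\leftrightarrow -h$) the eigenvalue error is literally $|\gamma_h-\gamma_h^N|$, a quadrature/aliasing error for a single Fourier coefficient of $C$. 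Up to that point your argument is a faithful, more explicit version of what the authors intend.

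The genuine gap is in the final quantitative step. The standard aliasing estimate for a single Fourier coefficient of a periodic function with one square-integrable derivative gives, by Cauchy--Schwarz on the aliasing series $\sum_{m\ne 0}\hat{C}(h+mN)$, a bound of order $N^{-1}\norm{C'}_{L^2([0,2\pi])}$, not $N^{-2}$. Your claim that ``one gains an extra order because $\gamma_h$ is paired against a smooth trigonometric factor'' is not a mechanism: the trigonometric factor is already absorbed into which Fourier coefficient you are computing, and it does not improve the decay of the aliased tail. The doubling of the convergence rate for eigenvalues ordinarily comes from the stationarity of the Rayleigh quotient at an eigenfunction (an eigenfunction error of order $N^{-s}$ yields an eigenvalue error of order $N^{-2s}$), which is precisely the variational route taken in Theorem~\ref{th:conv}; your direct-quadrature argument bypasses that structure and therefore cannot produce the extra power of $N$ without either assuming $C\in H^2$ (so the aliasing tail decays one order faster) or rerouting the estimate through the min--max/projection machinery of Lemmas~\ref{lm:sort} and~\ref{lm:L2proj}. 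As written, your proof establishes $|\lambda_k-\lambda_N^k|\le M N^{-1}\norm{C'}_{L^2([0,2\pi])}$ and asserts, rather than proves, the improvement to $N^{-2}$.
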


\subsection{Spectral approximation by Chebyshev polynomials}
\label{sec:Chebyshev}
We consider the shifted Chebyshev polynomials $T_k(x)$ in the computational domain $\Omega=[0,2\pi]$, for $k=0,\cdots,N$, defined as
\[T_k(x)=\cos\left(k\arccos\left(\frac{x-\pi}{\pi}\right)\right).\]
They form an orthogonal basis in the weighted space $L^2_{\omega}([0,2\pi])$, where the weighted function is $\omega(x)= \left(\sqrt{1-\left(\frac{x-\pi}{\pi}\right)^2}\right)^{-1}$.

Indeed
\[\int_0^{2\pi}T_h(x)T_k(x)\omega(x)dx = \frac{\pi^2}{2}c_h\delta_{h,k},\]
where 
\[
c_h=\begin{cases}
2,&h=0,\\
1,&\text{otherwise}.
\end{cases}
\]

In this case, the equation~\eqref{eq:variational} becomes
\begin{align}
\label{eq:variat-cheby}
\int_0^{2\pi}&\left(-\int_{B_\delta(x)}C(x-x')\left(w(x')-w(x)\right)dx' \right)T_k(x)\omega(x)dx\\
&\qquad =\int_0^{2\pi}\lambda w(x) T_k(x)\omega(x)dx,\notag
\end{align}
and we approximate $w(x)$ as linear combination of the Chebyshev polynomials:
\begin{equation}
\label{eq:FtransfCheb}
w_N(x) = \sum_{h=0}^N\hat{w}_h T_h(x).
\end{equation}
If we substitute equation~\eqref{eq:FtransfCheb} into~\eqref{eq:variat-cheby}, we find again the eigenvalue problem
\[A\hat{w}=\lambda M\hat{w},\]
with matrices $A = (a_{hk})_{h,k}$ and $M=(m_{hk})_{h,k}$ defined as follows
\begin{equation*}
a_{hk}=-\int_0^{2\pi}\omega(x)T_k(x)\int_{\Omega\cap B_{\delta}(x)}C(x-x')T_h(x')dx'dx+\frac{\pi^2}{2}\beta c_h\delta_{h,k},
\end{equation*}
\[m_{hk}=\int_0^{2\pi}T_h(x)T_k(x)\omega(x)\,dx = \frac{\pi^2}{2}c_h \delta_{h,k}.\]
If we denote by 
\begin{equation}
\label{gammahx}
\gamma_h(x) = \int_{\Omega\cap B_{\delta}(x)}C(x-x')T_h(x')dx',
\end{equation}
we can observe that the integral $\int_0^{2\pi}\omega(x)\gamma_h(x) T_k(x) dx$ can be efficiently computed by using the Chebyshev trasform.

Additionally, using the Chebyshev polynomials for the approximation of the peridynamic operator, we have that the matrix $A$ is nondiagonal in general.

\begin{remark}
The advantage of using the Chebyshev polynomials relies in the fact that they can be implemented to solve problem without imposing periodic boundary conditions. However, in order to make a comparison between this method and the one based on the Fourier trigonometric polynomials, we can apply the Chebyshev polynomials to periodic problems. To do this, we need to require that $T_k(0) = T_k(2\pi)$, which implies that $k$ must be even.

Thus, we have to restrict our attention to the following orthogonal sub-basis $\{T_k(x)\}_k$, for $k=0, \dots, N$, $k$ even. 
\end{remark}

%%%%%%%%%%%%%%%%%%%%%%%%%%%%%%%%%%%%

\section{Numerical simulations}
\label{sec:numerics}

In this section we validate and compare the methods proposed in Section~\ref{sec:Spectral} and collect some simulations in order to investigate the properties of the solutions of the eigenvalue problem~\eqref{eq:eigpb}.

To validate the results, we consider the following one-dimensional benchmark problem. We consider a bar in the spatial domain $[0,2\pi]$ and we take the micromodulus function $C(x) =4 e^{-x^2}/\sqrt{\pi}$, so in this case $\beta\approx 4$.

When we deal with the Fourier trigonometric polynomials, we discretize the computational domain by using the uniform mesh $x_j=2\pi j/N$ for $j=0,\cdots,N$; while, when we consider the Chebyshev polynomials, we take the non-uniform mesh given by the Guass-Chebyshev-Lobatto nodes $x_j=\pi + \pi\left(-\cos(\pi j/N)\right)$, for $j=0,\cdots,N$. This choice of nodes is suitable as it avoids the Gibb's phenomenon at the boundaries.

Figure~\ref{fig:eigv} shows a comparison between the exact eigenvalues of the peridynamic operator with the discrete ones computed by means of the Fourier trigonometric and Chebyshev polynomials for various choice of the horizon $\delta$. The convergence of the approximated eigenvalues is evaluated by computing the relative error in the discrete $L^2(\Omega)$ norm:
\[
%E^t_{L^\infty} = \frac{\max_{j} \left|u_N(x_j,t)- u^*(x_j,t)\right|}{\max_{j} \left|u_N(x_j,t)\right|},\qquad 
E_{L^2} = \frac{\sum_{k}\bigl|\lambda_N^k- \lambda_k\bigr|^2}{\sum_{k}\bigl|\lambda_N^k\bigr|^2},
\]
where $\lambda_k$ represents the $k$-th smallest eigenvalue of the continuous problem~\eqref{eq:variat}, and $\lambda_N^k$ denotes $k$-th smallest eigenvalue of the discrete problem~\eqref{eq:variat-discr} computed using both the Fourier trigonometric and the Chebyshev polynomials.

Table~\ref{tab:conv} shows the relative error and the convergence rate for the two methods for different values of $N$ and for $\delta=3$. We can observe that the order of convergence is in accordance with the theoretical results. 

\begin{figure}
\centering
\begin{subfigure}[b]{.495\textwidth}
\includegraphics[width=\textwidth]{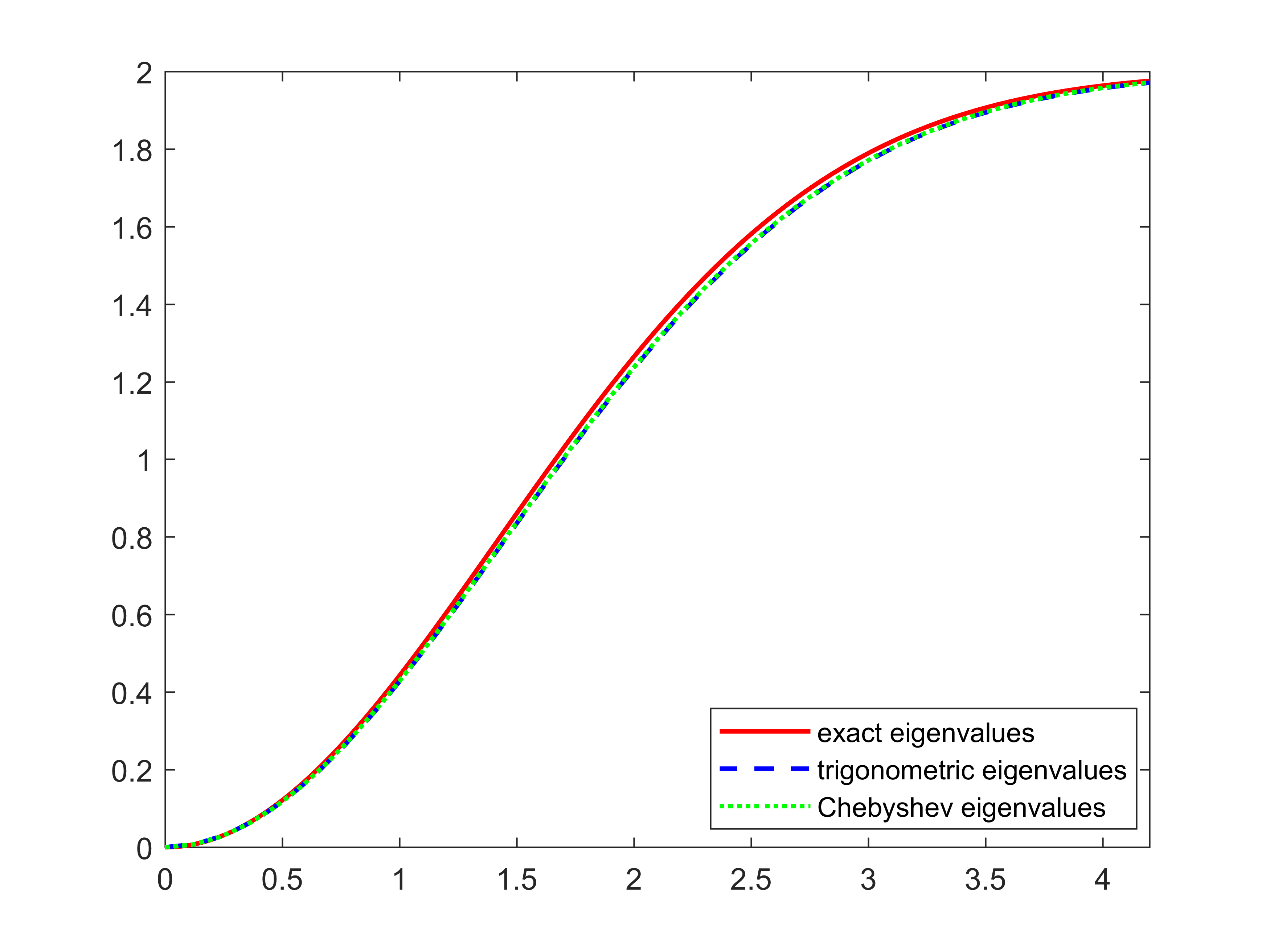}
\caption*{$\delta=3$.}
\end{subfigure}
\begin{subfigure}[b]{.495\textwidth}
\includegraphics[width=\textwidth]{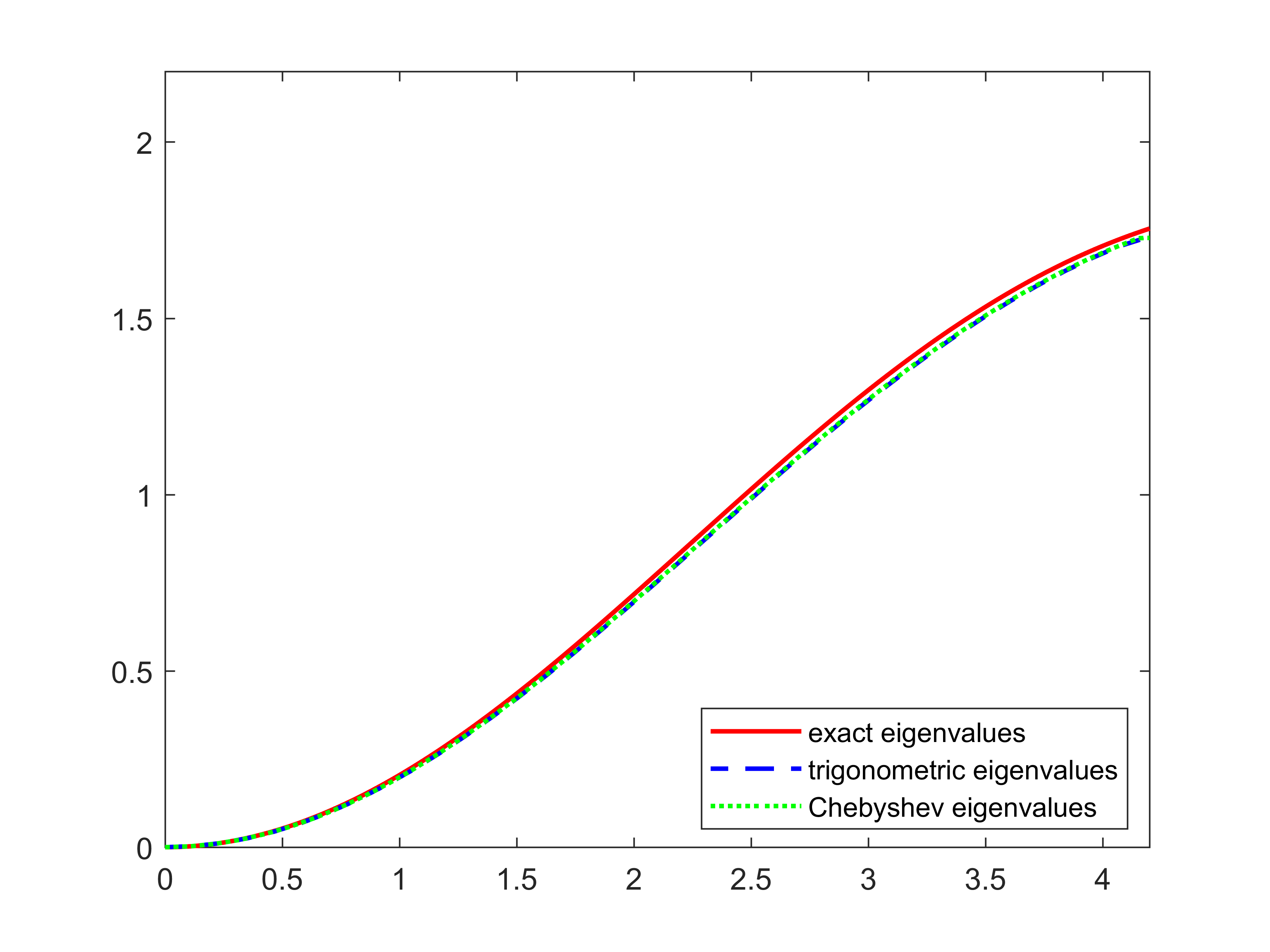}
\caption*{$\delta=1$.}
\end{subfigure}
\\
\begin{subfigure}[b]{.495\textwidth}
\includegraphics[width=\textwidth]{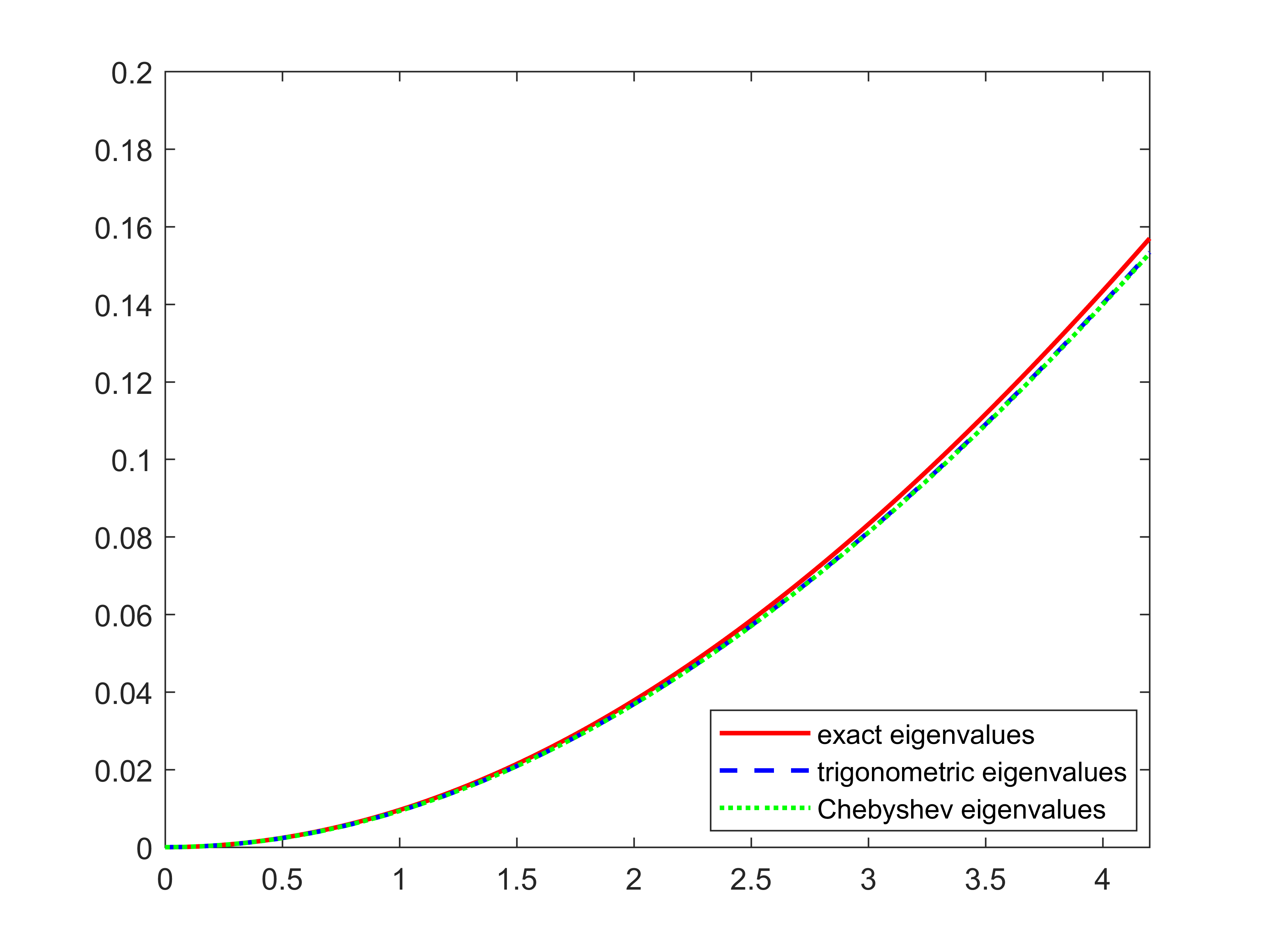}
\caption*{$\delta=0.3$.}
\end{subfigure}
\begin{subfigure}[b]{.495\textwidth}
\includegraphics[width=\textwidth]{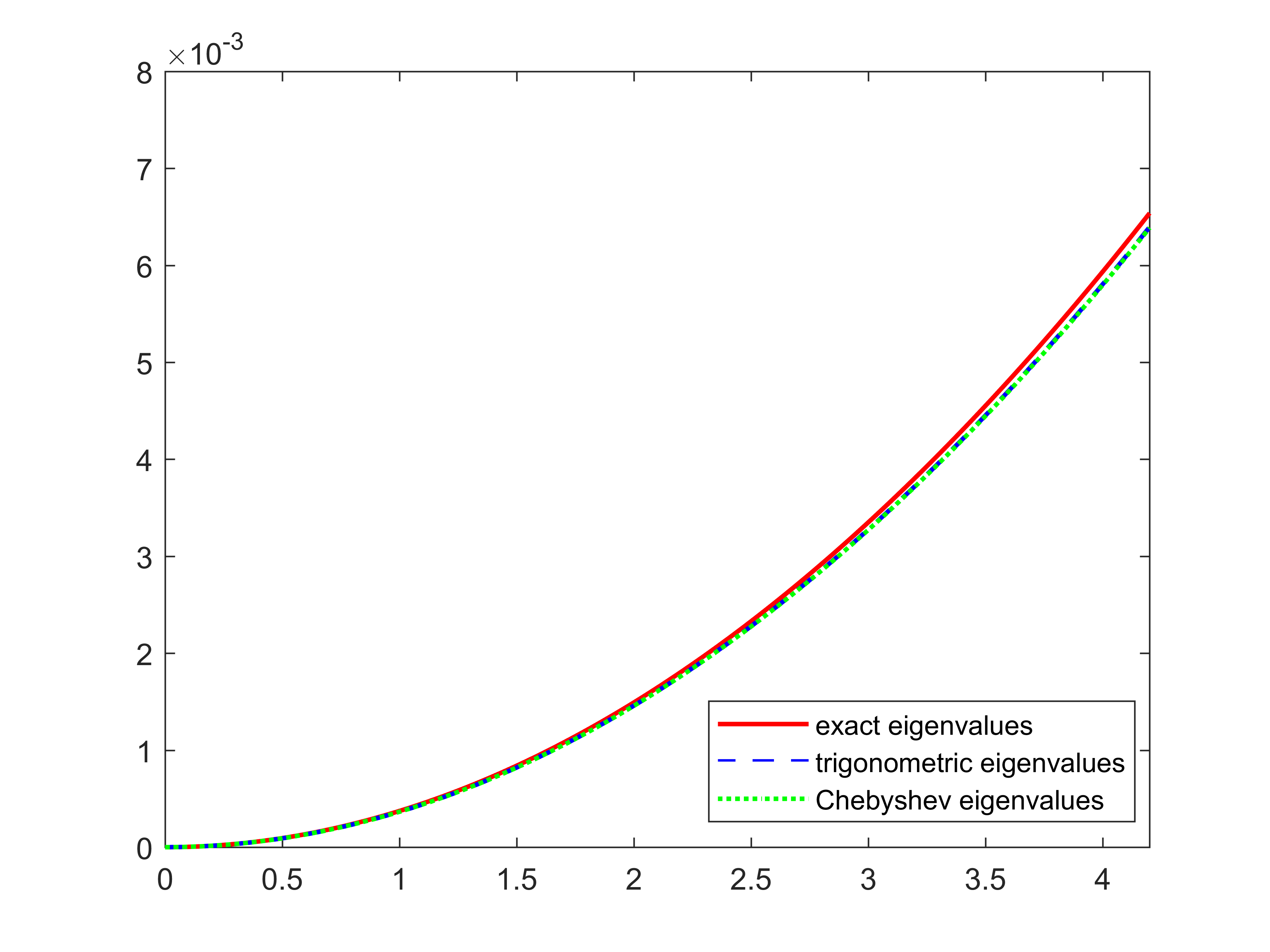}
\caption*{$\delta=0.1$.}
\end{subfigure}
\caption{Comparison of the exact peridynamic eigenvalues $\lambda(\nu_h)$ defined in~\eqref{eq:Fouriermultip} with respect to $\nu_h$ sampled at 100 equispaced points in the interval $[0,4]$ with the discrete ones computed by means of trigonometric and Chebyshev polynomials for different values of $\delta$. For the simulation we fix $N=100$.}
\label{fig:eigv}      % Give a unique label
\end{figure}

\begin{table}
% table caption is above the table
\centering
% For LaTeX tables use
\begin{tabular}{|ccc|ccc|}
\toprule
%\noalign{\smallskip}
\multicolumn{3}{|c|}{Trigonometric approximation} & \multicolumn{3}{c|}{Chebyshev approximation}\\
%\noalign{\smallskip}
\hline
%\noalign{\smallskip}
$N$ & $E_{L^2}$ & convergence rate & $N$ & $E_{L^2}$ & convergence rate\\
%\noalign{\smallskip}
\hline
$20$& $4.8016\times 10^{-3}$& $-$& $20$ & $8.8896\times 10^{-3}$& $-$\\
$40$& $1.1021\times 10^{-3}$& $2.0465$& $40$ & $8.6192\times 10^{-4}$& $2.0377$\\
$80$& $2.5438\times 10^{-4}$& $2.0616$& $80$ & $2.9249\times 10^{-4}$& $2.0485$\\
$160$& $5.5865\times 10^{-5}$& $2.0943$& $160$ & $5.1822\times 10^{-5}$&$2.0867$\\
$320$& $1.1000\times 10^{-5}$& $2.1481$& $320$ & $1.1802\times 10^{-5}$&$2.1267$\\
\bottomrule
\end{tabular}
\caption{The relative $L^2$-error as function of $N$ corresponding to the trigonometric and the Chebyshev approximation.}
\label{tab:conv}
\end{table}

Table~\ref{tab:eig} depicts the eigenvalues computed by the two methods for $N=20$ and $\delta =3$. We can notice that they are all positive and bounded by the value of $\beta$.%; this is in agreement with the theoretical results recalled in Section~\ref{sec:theoretic}.

Moreover, we find that using the Fourier trigonometric polynomials as basis for the approximation, except for $\lambda_0\approx 0$, the algebraic multiplicity of the other eigenvalues is equal to two. This is correlated with the fact that trigonometric polynomials are characterized by two different frequencies.

\begin{table}
% table caption is above the table
\centering
\label{tab:eig-comparison} 
% For LaTeX tables use
\begin{tabular}{|c|c|}
\hline
%\noalign{\smallskip}
$\lambda_n^{Trig}$, $n=0,\cdots, 20$  & $\lambda_n^{Cheby}$, $n=0,\cdots, 20$  \\
%\noalign{\smallskip}
\hline
%\noalign{\smallskip}
0.0000 & 0.1905 \\
0.8846 & 0.7084 \\
0.8846 & 1.4173 \\
2.5285 & 2.1567 \\
2.5285 & 2.8007 \\
3.5782 & 3.2864 \\
3.5787 & 3.6105 \\
3.9267 & 3.8042 \\
3.9267 & 3.9090 \\
3.9921 & 3.9607 \\
3.9921 & 3.9841 \\
3.9995 & 3.9940 \\
3.9995 & 3.9978 \\
3.9999 & 3.9993 \\
3.9999 & 3.9997 \\
3.9999 & 3.9999 \\
3.9999 & 3.9999 \\
3.9999 & 3.9999 \\
3.9999 & 3.9999 \\
3.9999 & 3.9999 \\
3.9999 & 3.9999 \\
%\noalign{\smallskip}
\hline
\end{tabular}
\caption{The peridynamic eigenvalues computed by using trigonometric and Chebyshev polynomials}
\label{tab:eig}
\end{table}

Figure~\ref{fig:trig-eigfun} shows the first five eigenfunctions obtained by using the trigonometric polynomials for the approximation of the peridynamic operator.

\begin{figure}
\centering
%\begin{subfigure}[b]{.48\textwidth}
\includegraphics[width=.65\textwidth]{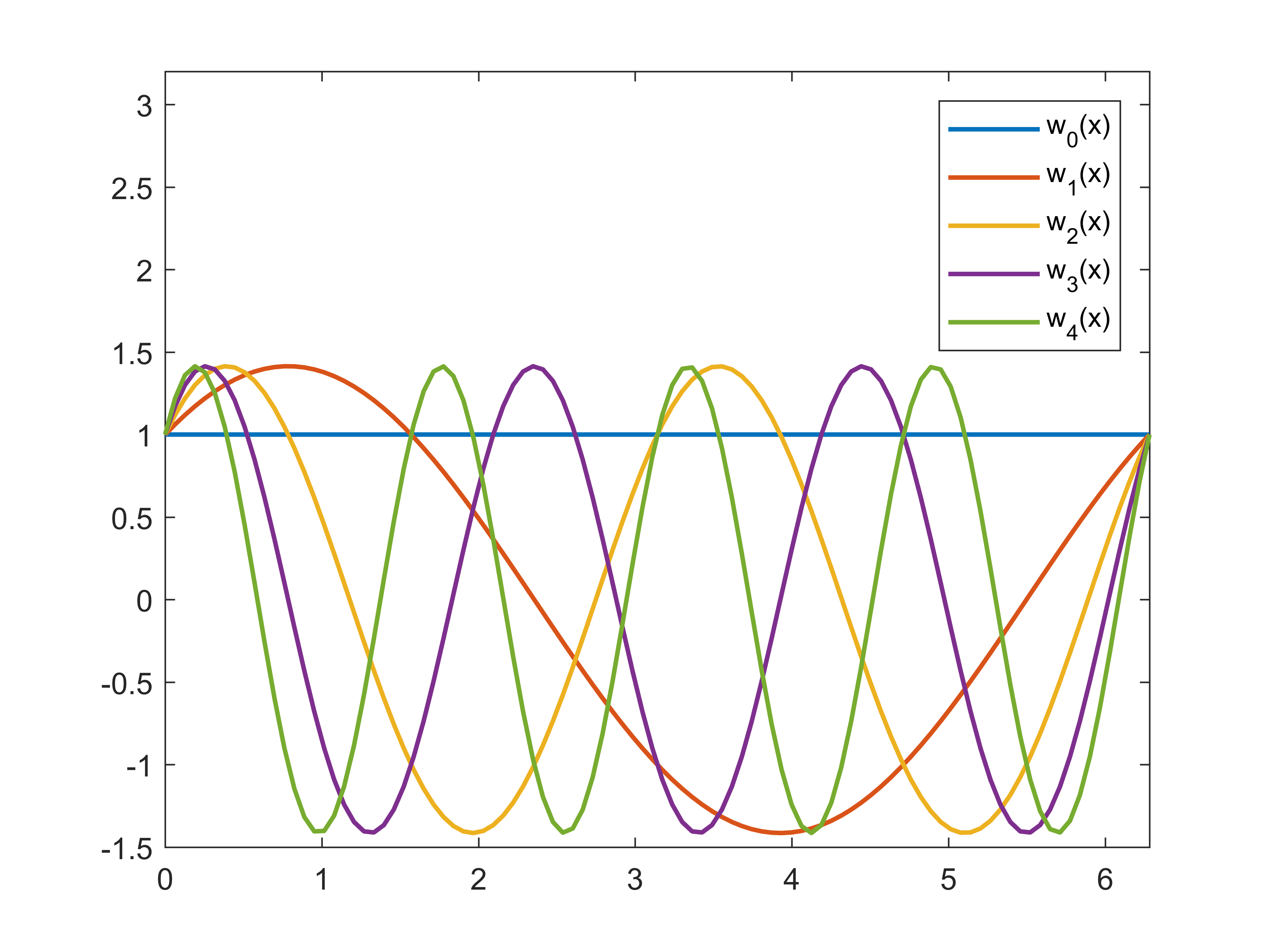}
%\caption*{The comparison between the exact solution and the numeric one computed using the trigonometric polynomials, for different values of $N$.}
%\end{subfigure}
%\quad
%\begin{subfigure}[b]{.48\textwidth}
%\includegraphics[width=\textwidth]{comparison_Cheby.png}
%%\caption*{The comparison between the exact solution and the numeric one computed using the Chebyshev polynomials, for different values of $N$.}
%\end{subfigure}
\caption{The first five eigenfunctions obtained by using the trigonometric polynomials.}
\label{fig:trig-eigfun}      % Give a unique label
\end{figure}

In addition, we make a simulation considering the peridynamic eigenvalue problem subject to antiperiodic boundary condition on the spatial domain $\Omega=[-1,1]$:
\begin{equation}
\label{eq:noperiod}
\begin{cases}
-\mathcal{L}w(x) = \lambda w(x),\qquad x\in [-1,1],\\
w(-1)=-1,\,w(1)=1.
\end{cases}
\end{equation}
%\textcolor{red}{
The well-posedness of~\eqref{eq:noperiod} is achieved in~\cite{zhou,valdinoci}.%}
 For the simulation we use the same micromodulus function as before and we compute the eigenvalues by using the Chebyshev polynomials. In Figure~\ref{fig:nonperiodic}, we display a comparison between the exact eigenvalues and the discrete ones for different values of $N$ and $\delta$. Here, the exact eigenvalues are computed by using the proposed numerical method by using $N=200$.

\begin{figure}
\centering
\begin{subfigure}[b]{.495\textwidth}
\includegraphics[width=\textwidth]{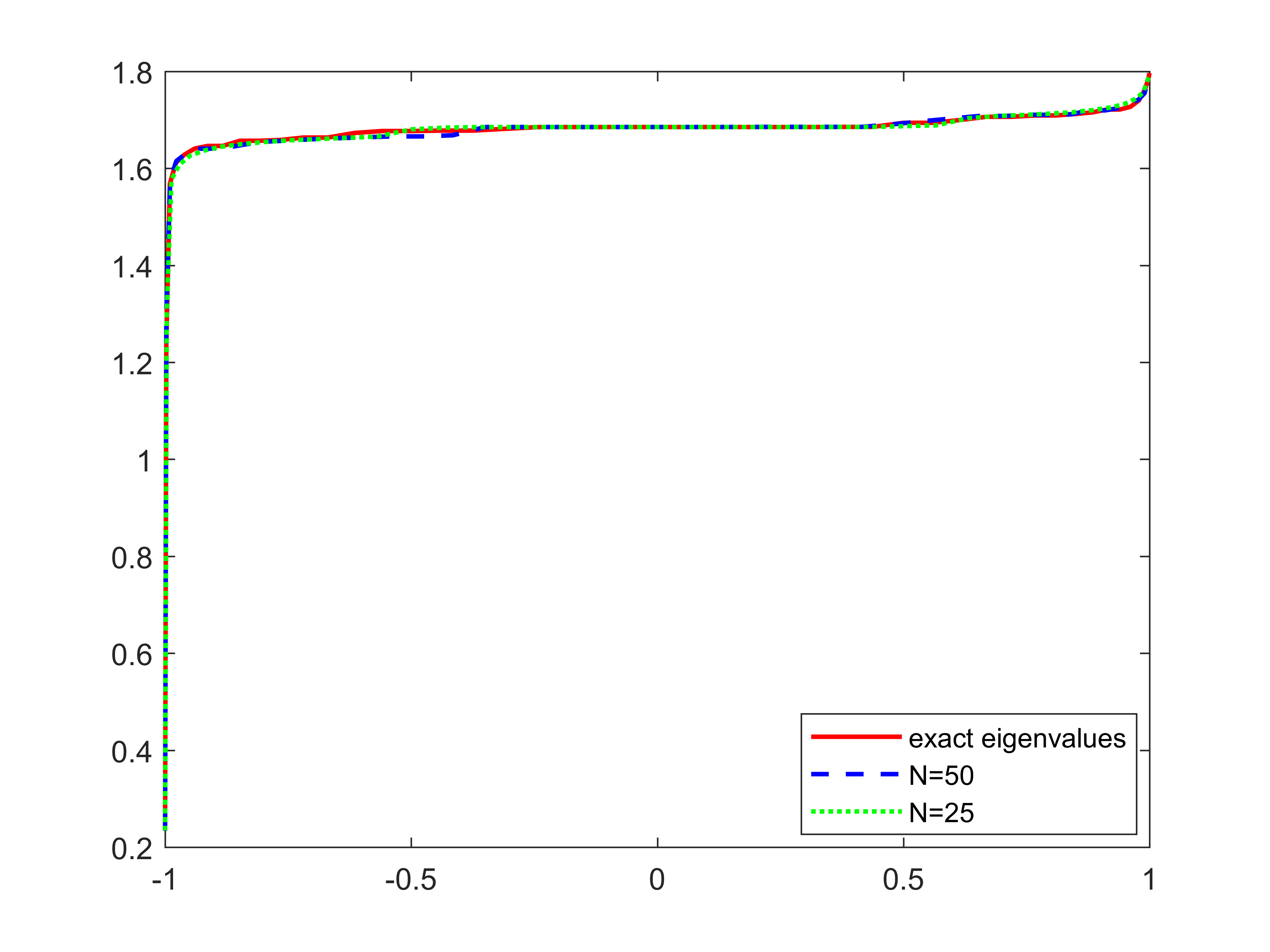}
\caption*{$\delta=1$.}
\end{subfigure}
\begin{subfigure}[b]{.495\textwidth}
\includegraphics[width=\textwidth]{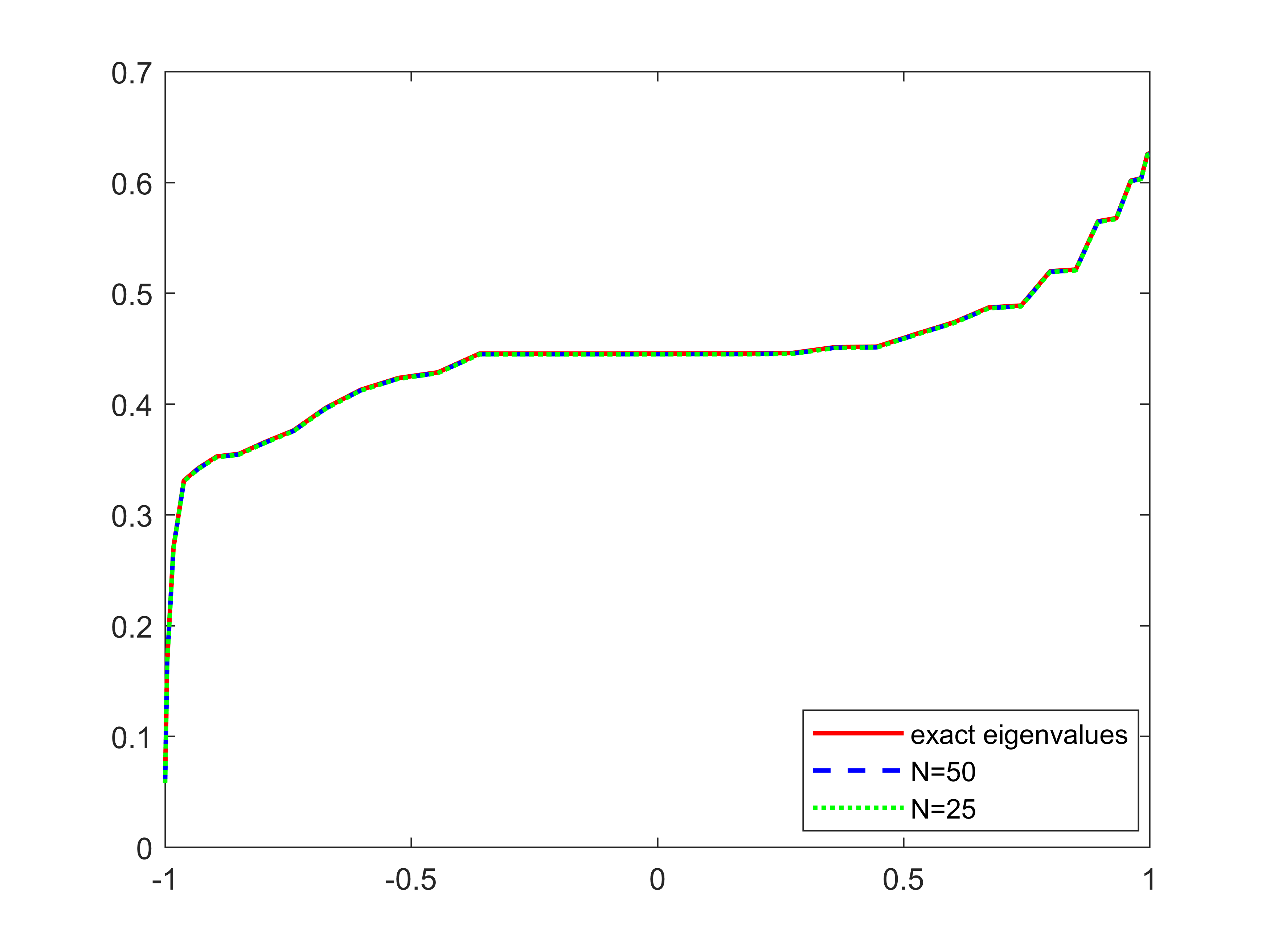}
\caption*{$\delta=0.2$.}
\end{subfigure}
\caption{The comparison of the exact eigenvalues of the problem~\eqref{eq:noperiod} in the interval $[-1,1]$ with the discrete ones computed by means of the Chebyshev polynomials for different choice of $N$ and $\delta$.}
\label{fig:nonperiodic}      % Give a unique label
\end{figure}

\subsection{The solution of the peridynamic problem}

Let $[0,T]$, for some $T>0$, be the time domain under investigation. Consider a microelastic bar with constant mass density $\rho>0$ occupying a region $\Omega\subset {\mathbb{R}}$. Then, the peridynamic model describes the dynamics of the body and its equation is given by
\begin{equation}
\label{eq:perid}
\rho\partial_{tt}^2{ u}( x,t) = \mathcal{L} u(x,t) = \int_{\Omega\cap B_{\delta}(x)}  C( x'- x)\left( u( x',t) -  u( x,t)\right)d x',
\end{equation}
for $x\in \Omega$, and $t\in[0,T]$, with initial conditions
\begin{equation}
\label{eq:initcond}
 u( x,0)= u_0(x),\quad \partial_t  u( x,0)= v_0( x),\qquad  x\in \Omega,
\end{equation}
where $u$ is the displacement field.

We assume that the bar is subjected to the uniform initial displacement $u_0(x) = e^{-x^2}$, $v_0(x)=0$ and we fix $\delta =3$ as the size of the horizon. 

Moreover, we assume that the constant density of the body is $\rho(x) = 1$ and the interaction between two material particle is given by the micromodulus function $C$ defined as in the previous section.

If the spectrum of the nonlocal operator $-\mathcal{L}$ is known, one can formally express the solution of~\eqref{eq:perid}-\eqref{eq:initcond} as
\begin{equation}
\label{eq:formal-solution}
u(x,t) =\sum_{k=0}^{\infty} w_k(x)\left(a_k\cos(\sqrt{\lambda_k} t) + b_k\sin(\sqrt{\lambda_k} t)\right),\qquad x\in\Omega,\quad t\in[0,T],
\end{equation}
where $\lambda_k$ and $w_k(x)$ represent respectively the $k$-th eigenvalue and eigenfunction of the peridynamic operator $-\mathcal{L}$ on the domain $\Omega$, and the coefficients $a_k$ and $b_k$ are computed from the initial conditions~\eqref{eq:initcond} by
\begin{align}
\label{eq:a-b-coeff}
a_k &= \frac{\int_{\Omega} u_0(x)w_k(x)dx}{\int_{\Omega} w_k^2(x)dx},\\
b_k &= \frac{1}{\sqrt{\lambda_k}}\frac{\int_{\Omega} v_0(x) w_k(x)dx}{\int_{\Omega} w_k^2(x)dx}.\notag
\end{align}

Thus, the problem of finding the solution of~\eqref{eq:perid}-\eqref{eq:initcond} reduces to study an eigenvalue problem.

\begin{figure}
\centering
\begin{subfigure}[b]{.495\textwidth}
\includegraphics[width=\textwidth]{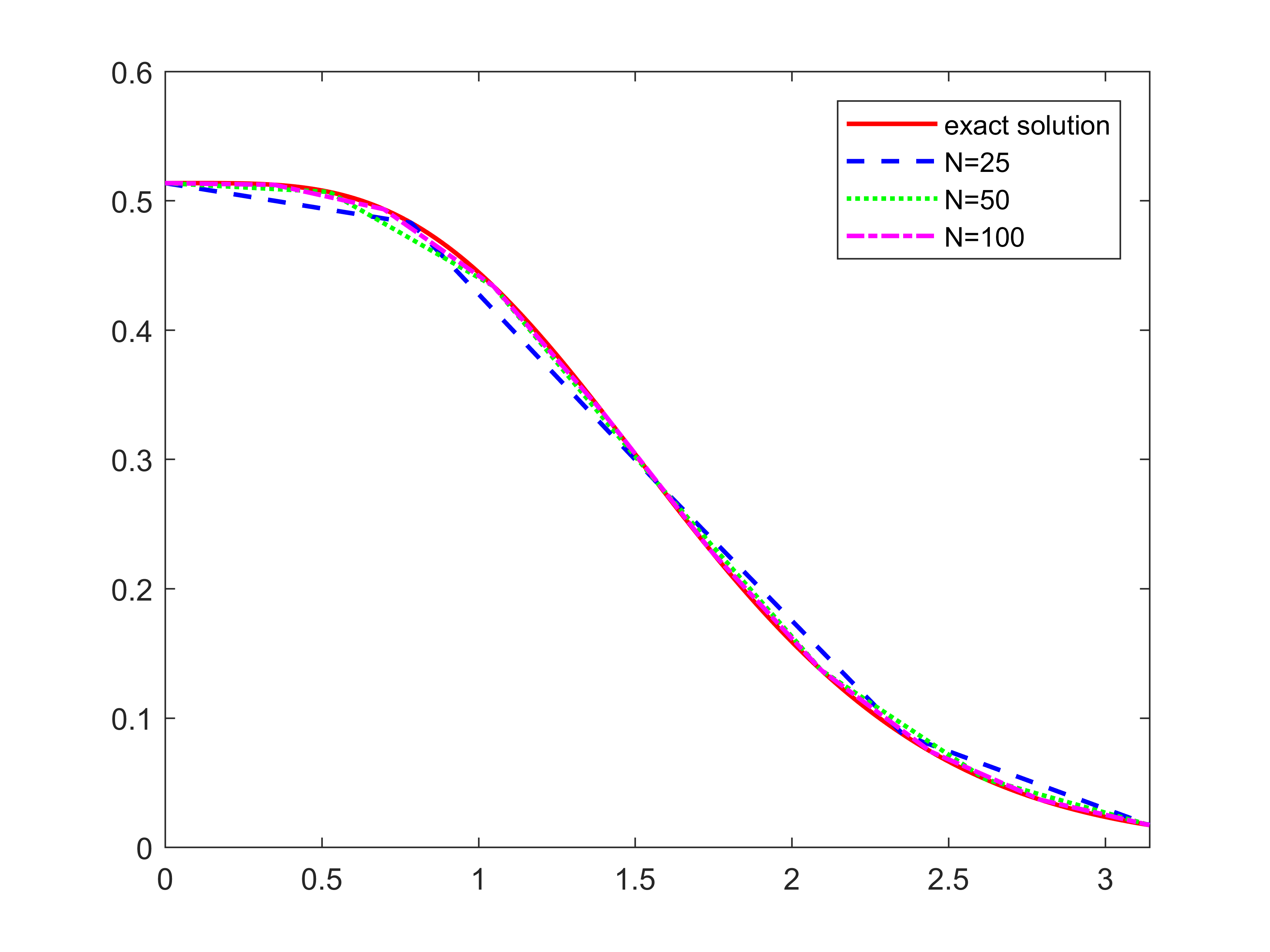}
%\caption*{The comparison between the exact solution and the numeric one computed using the trigonometric polynomials, for different values of $N$.}
\end{subfigure}
\begin{subfigure}[b]{.495\textwidth}
\includegraphics[width=\textwidth]{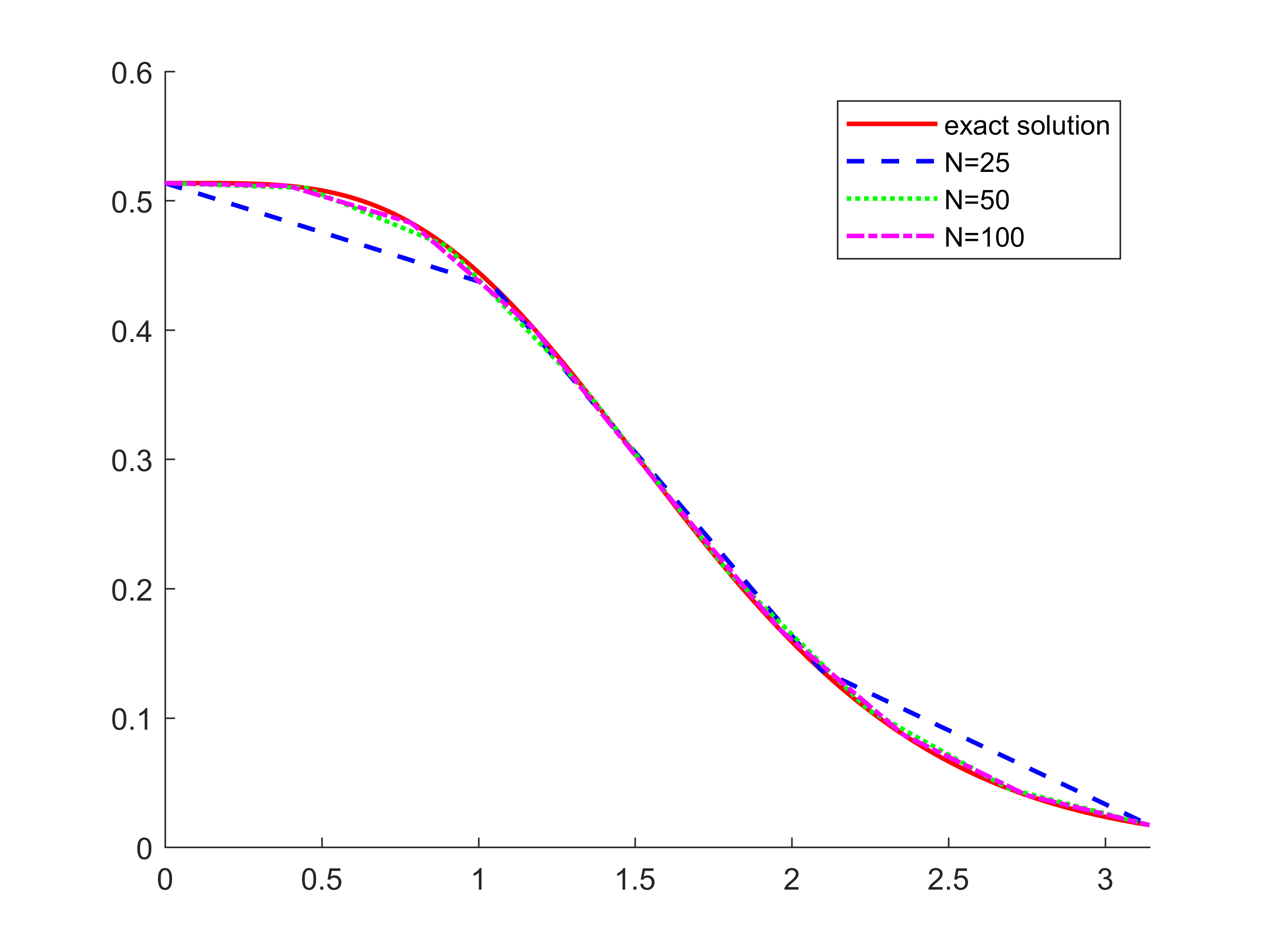}
%\caption*{The comparison between the exact solution and the numeric one computed using the Chebyshev polynomials, for different values of $N$.}
\end{subfigure}
\caption{The comparison between the exact solution and the numeric one for different values of $N$ at time $t=1$. On the left, the numeric solution is computed by using the trigonometric polynomials; on the right, the numeric solution is computed by using the Chebyshev polynomials.}
\label{fig:compare}      % Give a unique label
\end{figure}

In Figure~\ref{fig:compare} we make a comparison between the exact solution of the peridynamic problem~\eqref{eq:perid}-\eqref{eq:initcond} with the numerical solution obtained by using both the Fourier trigonometric polynomials and the Chebyshev polynomials as basis to solve the eigenvalue problem~\eqref{eq:geneigpb}. We can observe in both cases a good agreement as the total number of mesh-points $N$ increases.

\section{Conclusions}
\label{sec:concl}

In this paper we propose two spectral methods to numerically compute the eigenvalues of the one-dimensional nonlocal linear peridynamic operator. They deal with the weak formulation of peridynamic model and make use of both the Fourier trigonometric and the Chebyshev polynomials. The first ones are suitable in presence of periodic boundary conditions, while the others can be applied even to non periodic problems. We prove the convergence of the discrete eigenpairs to the continuous ones in the $L^2$-norm and our simulations confirm the theoretical results.

\section*{Acknowledgements}
This paper has been partially supported by GNCS of Istituto Nazionale di Alta Matematica, by PRIN 2017 ``Discontinuous dynamical systems: theory, numerics and applications'' coordinated by Nicola Guglielmi and by Regione Puglia, ``Programma POR Puglia 2014/2020-Asse X-Azione 10.4 Research for Innovation-REFIN - (D1AB726C)''. The authors thank the anonymous referees for critical remarks, suggestions and for the careful reading of the manuscript.

%\section*{References}

%\bibliographystyle{model1b-num-names}
\bibliographystyle{plain}
\bibliography{biblioPeri2}

\end{document}